\definecolor{myblue}{RGB}{0,29,119}
\newtheorem{theorem}{Theorem}[section]
\newtheorem{proposition}[theorem]{Proposition}
\newtheorem{lemma}[theorem]{Lemma}
\theoremstyle{definition}
\newtheorem{definition}[theorem]{Definition}
\newtheorem{example}[theorem]{Example}
\newtheorem{remark}[theorem]{Remark}
\newtheorem{problem}[theorem]{Problem}
\newtheorem{notation}[theorem]{Notation}
\newtheorem*{theorem*}{Theorem}
\newcommand{\Cc}{{\mathbb C}} 
\newcommand{\Pp}{{\mathbb P}} 
\newcommand{\cA}{{\mathcal A}}
\newcommand{\cO}{{\mathcal O}}
\providecommand{\AMS}{$\mathcal{A}$\kern-.1667em%
\lower.25em\hbox{$\mathcal{M}$}\kern-.125em$\mathcal{S}$}
\begin{document}
\let\thefootnote\relax\footnotetext{MSC 2010: 14M15, 14B05, 15A69\\ Keywords: Projectively dual variety, hyperdeterminant, Grassmannian, singular locus
 }

\title[Singularities of Dual Grassmannian]{Singularities of the dual varieties associated to exterior representations:\\\vspace{10pt}1. Dual Grassmannian}

\author{Emre SEN}


\maketitle

\begin{abstract}
For a given irreducible projective variety $X$, the closure of the set of all hyperplanes containing tangents to $X$ is the projectively dual variety $X^{\vee}$. We study the singular locus of projectively dual varieties of certain Segre-Pl\"{u}cker embeddings in series of papers. In this work we give a classification of the irreducible components of the singular locus of the dual Grassmannian. Basically, it admits two components: cusp type and node type which are degeneracies of a certain Hessian matrix, and the closure of the set of tangent planes having more than one critical point, respectively. In particular we reproduce the result about the normality of the dual Grassmannian varieties.
\end{abstract}

\tableofcontents 

\section{Introduction}

Projective duality is one of the fundamental and historical notions in algebraic geometry. Briefly it is the correspondence between lines and hyperplanes in projective space. This relation can be extended to the nonlinear subvarieties of projective space. Namely, for a given irreducible projective variety $X$, the closure of the set of all hyperplanes containing tangents to $X$ is the projectively dual variety $X^{\vee}$.
 
An important class of dual varieties - introduced by I. Gelfand, M. Kapranov and A. Zelevinsky \cite{GKZ} - is hyperdeterminants which are the defining equations of projectively duals of Segre embedding. 
Also they are analogs of determinants for multidimensional matrices. The first treatment of the subject was initiated by A. Cayley and then L. Schl\"{a}fli but it was not studied for the next 150 years. In the early 90's the subject was revitalized by  I. Gelfand, M. Kapranov and A. Zelevinsky. 


In a series of papers  \cite{sen}, the problem we work on is:

\begin{problem}
 Consider the Segre-Pl\"{u}cker embedding :
\begin{align}
X=\Pp\left(\bigwedge\limits^{k_1}\Cc^{N_1}\right)\times\ldots\times\Pp\left(\bigwedge\limits^{k_r}\Cc^{N_r}\right)\rightarrow\Pp\left(\bigwedge\limits^{k_1}\Cc^{N_1}\otimes\cdots\otimes\bigwedge\limits^{k_r}\Cc^{N_r}\right)
\end{align}
where $N_i\geq 2k_i$ for $1\leq i\leq r$. Describe the singular locus of the dual variety $X^{\vee}$ provided that it is a hypersurface, and classify its components.
\end{problem}

We point out that, if all $k_i = 1$, then the Segre-Pl\"{u}cker embedding reduces to the Segre embedding and the dual is a hyperdeterminant hypersurface. In the seminal work, J. Weyman and A. Zelevinsky worked on singularities of hyperdeterminants \cite{WZ}; they classified irreducible components of the singular locus of hyperdeterminants which are cusp and node components. This was one of the questions raised by Gelfand et al. in \cite{GKZ92}

Another reduction occurs in the case of $r=1$, 
T. Maeda \cite{maeda} and F. Holweck \cite{hol} studied dual Grassmannians i.e. duals of Pl\"{u}cker embeddings. Here, we focus only on this case since we want to give a complete and unified classification of components of singular locus. In addition, we also solve a linear algebra problem of independent interest which is about polynomial factors of determinants of a certain type of matrices in the section \ref{hessian}, see theorem \ref{determinant}\\

Now, we outline our results and introduce definitions and notations to be used throughout the paper.\\

Let $V$ be a vector space over complex field $\Cc$, $V^{\ast}$ be its dual. The set of one dimensional subspaces of $V$ is called the  projectivization of $V$ and denoted by $\Pp\left(V\right)$. For each point in $\Pp\left(V\right)$ we can associate a hyperplane in $V$. After regarding those hyperplanes as points, dual projective space $\Pp\left(V\right)^{\ast}\cong\Pp\left(V^{\ast}\right)$ is obtained.  
\begin{definition}
Let $X\subset\Pp^N=\Pp\left(\Cc^{N+1}\right)$ be a projective variety. The dual variety $X^{\vee}\subset\left(\Pp^N\right)^{\ast}$ is the closure  of the set of all tangent hyperplanes to $X$ i.e.
\begin{align}
X^{\vee}:=\overline{\left\{ F\,\vert\, p\in X_{sm}\,\,, \Pp T_pX\subseteq F\right\}}
\end{align}
where $\Pp T_pX$ is tangent space to $X$ at smooth point $p\in X_{sm}$.
\end{definition}

Now we focus on the case that $X$ is the Grassmannian variety $G(k,\Cc^N)$ which is the set of all $k$-dimensional vector subspaces of $\Cc^N$. Consider the Pl\"{u}cker embedding:

\begin{align}
G(k,\Cc^N)\rightarrow\Pp\left(\bigwedge^k\Cc^N\right)
\end{align}
which takes a $k$ dimensional subspace $L\subset\Cc^N$ to the one dimensional subspace $\bigwedge^k L\subset \bigwedge^k\Cc^N$. If we choose the coordinate matrix $K$ of size $k\times N$: \vspace{3pt}
\begin{align}\label{K}
K=\begin{bmatrix}
x^{1}_{1} & x^{1}_2  &\cdots &  x^{1}_{N} \\ 
x^2_1     &   x^2_2       &\cdots &  x^{2}_{N} \\
\vdots    &\vdots    &\ddots & \vdots    \\ 
x^k_1    &x^k_2    &\cdots & x^{k}_{N} 
 \end{bmatrix}
\end{align}
then, minors of this matrix give us local coordinates of ambient space, which are called Pl\"{u}cker coordinates. They are subject to quadratic relations which are called Pl\"{u}cker relations.\\

For a given index set $I=\left(i_1,\ldots,i_k\right)$, $\eta_I$ denotes the minor of coordinate matrix \ref{K} where columns are indexed by $I$. To describe the dual Grassmannian, we can use the multilinear form:
\begin{align}
\label{form}F\left(A,x\right)=\sum\limits_{1\leq i_1<i_2<\ldots<i_k\leq N} a_{i_1\ldots i_k}\eta_{i_1\ldots i_k}
\end{align} 
 
The form $F$ belongs to the dual Grassmannian $G(k,\Cc^N)^{\vee}$ if and only if $F$ and its partial derivatives with respect to local coordinates are zero at some nonzero point $x$, i.e. 
  the system of equations
\begin{gather}  
   F\left(A,x\right)=0,\hspace{1cm}\frac{\partial F\left(A,x\right)}{\partial x^{j}_{i}}=0
\end{gather}   
    for all i,j has a nontrivial solution for some point $x$.
\begin{definition} We call such a point $x$,   critical point of the form  $F\left(A,x\right)$\ref{form}.
\end{definition}
\begin{remark}\label{array}
Let $\cA$ be the space of multidimensional arrays i.e coefficients of the form $F=F\left(A,x\right)$ for $X=G\left(k,\Cc^N\right)$. Let $Z$ be the incidence variety:
\begin{align}\label{incidence1}
Z:=\left\{\left(A,x\right)\in \cA\times X\vert F\left(A,x\right)\in X^{\vee}\right\}
\end{align}
The image of the projection $\pi_1$ onto the first factor of $Z$ is the dual Grassmannian variety i.e. 
\begin{align}\label{projection}
\pi_1(Z)=X^{\vee}
\end{align}
\end{remark}

\begin{remark} There is a natural action of $G=GL\left(V\right)$ on $V$. There is an induced action of $G$ on $\bigwedge^kV$ and $\cA$. Explicitly:
\begin{align}
F\left(A,g.\left(v_1\wedge\ldots\wedge v_k\right)\right)=F\left(A,gv_1\wedge\ldots\wedge gv_k \right)=F\left(A\cdot g,v_1\wedge\ldots\wedge v_k\right)
\end{align}
for all $g\in G$. The coefficients of the new form $F\left(A\cdot g,x\right)$ are given by
\begin{align}
\left(A\cdot g\right)_{j_1\ldots j_k}=\sum\limits_{i_1<i_2<\ldots<i_k} a_{i_1i_2\ldots i_k}.g^{j_1\ldots j_k}_{i_1\ldots i_k}
\end{align}
where $g^{j_1\ldots j_k}_{i_1\ldots i_k}$ is the $k\times k$ minor of $g$ with column indices $j_1,\ldots,j_k$ and row indices $i_1,\ldots,i_k$.
\end{remark}

To analyze singularities, an important tool is the Hessian matrix associated to form \ref{form}  which is the matrix of the second partial derivatives in the local coordinates:
\begin{align}\label{h1}
H(F)=H\left(F(A,x)\right)=\vert\vert \cfrac{\partial^2F\left(A,x\right)}{\partial x^i_j\partial x^{i'}_{j'}}\vert\vert
\end{align}
We analyze its algebraic properties in section \ref{hessian}.

\begin{definition} Cusp type locus is the subvariety of $X^{\vee}$ such that determinant of Hessian matrix vanishes. Formally:
\begin{align*}
X^{\vee}_{cusp}:=\left\{F\mid \exists p\in X \;\text{s.t}\; \Pp T_pX\subset F \;\text{and}\;\det H\left(F\right)\vert_p=0\right\}.
\end{align*}
\end{definition}

We prove the following in section \ref{cuspsection}:
\begin{theorem}
For $k\geq 3$ and $N\geq 9$ or $k\geq 4$ and $N\geq 2k$, $X^{\vee}_{cusp}$ is an irreducible hypersurface in $X^{\vee}$. 
\end{theorem}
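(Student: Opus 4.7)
The plan is to introduce a refined incidence variety
\begin{align*}
Z_{cusp} := \left\{(A,x) \in Z \,\middle|\, \det H(F(A,x))\vert_x = 0\right\} \subset Z,
\end{align*}
so that $X^{\vee}_{cusp} = \pi_1(Z_{cusp})$, and then to prove $Z_{cusp}$ itself is irreducible. Because $\pi_1\colon Z \to X^{\vee}$ is generically one-to-one (a general tangent hyperplane has a unique contact point with $X$), irreducibility of $Z_{cusp}$ transfers to $X^{\vee}_{cusp}$. To control $Z_{cusp}$ itself I would use the second projection $\pi_2\colon Z_{cusp} \to G(k,\Cc^N)$, which is $GL(V)$-equivariant for the action recalled above. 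Since $GL(V)$ acts transitively on the Grassmannian, it suffices to show that a single fiber of $\pi_2$ is irreducible: then $Z_{cusp}$ becomes a homogeneous bundle with irreducible fibers over an irreducible base, hence irreducible.

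\textbf{Analysis of the fiber over the base point.} Set $x_0 := e_1 \wedge \cdots \wedge e_k$. The critical-point equations $F(A,x_0)=0$ and $\partial F/\partial x^i_j\vert_{x_0}=0$ translate into linear vanishing conditions on the coefficients $a_I$: the coefficient $a_{12\ldots k}$, together with every $a_I$ with $|I \cap \{1,\ldots,k\}|=k-1$, must vanish. Thus $\pi_2^{-1}(x_0) \cap Z$ is a linear subspace of $\cA$, and $\pi_2^{-1}(x_0) \cap Z_{cusp}$ is cut out inside it by the single polynomial equation $\det H(F)\vert_{x_0}=0$. A direct computation shows that $H(F)\vert_{x_0}$ is linear in the remaining coefficients $a_I$ with $|I \cap \{1,\ldots,k\}|=k-2$, and its block pattern is exactly that of the structured matrices studied in Section~\ref{hessian}.

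\textbf{Conclusion and hypersurface bound.} The decisive input is Theorem~\ref{determinant}, which describes the irreducible factorization of the determinant of matrices of this shape as a polynomial in the surviving $a_I$. Under the hypotheses $k\ge 3,\; N\ge 9$ or $k\ge 4,\; N\ge 2k$, I expect this factorization to yield a single irreducible factor, so that $\pi_2^{-1}(x_0)\cap Z_{cusp}$ is an irreducible hypersurface in $\pi_2^{-1}(x_0)\cap Z$. Transporting this by $GL(V)$-equivariance and then applying $\pi_1$ produces an irreducible $X^{\vee}_{cusp}$, and its codimension in $X^{\vee}$ equals one provided we can exhibit at least one $F\in X^{\vee}$ with nondegenerate Hessian at its critical point; this is automatic from the existence of a smooth point of $X^{\vee}$, since a smooth hypersurface point cannot lie in its own cusp locus.

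\textbf{Main obstacle.} The hard step is the irreducibility statement in Theorem~\ref{determinant} under the precise bounds on $(k,N)$: one must rule out spurious extra irreducible factors of the Hessian determinant which would otherwise split $\pi_2^{-1}(x_0)\cap Z_{cusp}$ into several components and destroy the irreducibility of $X^{\vee}_{cusp}$. The constraints $k\ge 3,\;N\ge 9$ and $k\ge 4,\; N\ge 2k$ presumably reflect exactly the thresholds beyond which no such parasitic factors can appear, and verifying this combinatorial-algebraic fact is the core content of the argument.
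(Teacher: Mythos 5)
Your irreducibility argument is essentially the paper's. Reducing to the fiber $\pi_2^{-1}(x_0)\cap Z_{cusp}$ and using equivariance of the $GL(V)$-action is the same move as the paper's passage to $\nabla^0_{cusp}$ followed by $X^{\vee}_{cusp} = \overline{\nabla^0_{cusp}\cdot G}$; both then invoke Theorem~\ref{determinant} (irreducibility, or a pure-power factorization, of $\det H_{k,N}$) to conclude the fiber is irreducible. That part is sound.

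The hypersurface claim has a genuine gap. You correctly observe that the fiber of $\pi_2$ over $x_0$ inside $Z_{cusp}$ is a hypersurface in the corresponding fiber of $Z$, so $\dim Z_{cusp} = \dim Z - 1 = \dim X^{\vee} - 1$. But this only gives $\dim X^{\vee}_{cusp} = \dim \overline{\pi_1(Z_{cusp})} \le \dim X^{\vee} - 1$, i.e.\ codimension at least one. Your closing argument (``exhibit one $F$ with nondegenerate Hessian'') shows only that $X^{\vee}_{cusp}\neq X^{\vee}$, which is the same inequality repeated. What is missing is a proof that $\pi_1\vert_{Z_{cusp}}$ is generically finite, so that the dimension does not drop further under projection. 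Generic birationality of $\pi_1$ on $Z$ is no help here because $Z_{cusp}$ lies entirely over the complement of the locus where $\pi_1$ is an isomorphism. The paper supplies exactly this missing step in Theorem~\ref{thmcusp}: one chooses $A$ for which $H(F(A,x))\vert_{x^0}$ has corank exactly one (the constructions in Subsection~\ref{ranklemmas} guarantee this for the stated $(k,N)$), deduces from the rank-one adjugate that the linearized critical-point equations~\eqref{eq1*} impose exactly one relation on the $da^t_p$ beyond $da=0$, and then checks that the differential of the determinant~\eqref{eq2} cannot be combined with~\eqref{eq1*} to eliminate the $dx$ variables and yield a third relation. This gives $\operatorname{codim}_{\cA} T_A X^{\vee}_{cusp} = 2$, hence codimension one in $X^{\vee}$.

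One smaller point: the threshold $N\ge 9$ for $k=3$ is not governed solely by irreducibility of $\det H_{3,N}$ (which already holds at $N=8$ by Theorem~\ref{determinant}); for $k=3$, $N=8$ the cusp locus is irreducible but has codimension two in $X^{\vee}$, precisely because a corank-one Hessian at a critical point does not exist there and the tangent-space count changes. So the bounds in the theorem encode both the factorization data and the rank constructions of Subsection~\ref{ranklemmas}, not just the absence of spurious factors.
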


\begin{definition}\label{definitionofnode}
Node type singular locus is the subvariety of $X^{\vee}$ which is the set of forms tangent to $X$ at least two distinct points $p,q\in X$. Formally:
\begin{align*}
X^{\vee}_{node}:=\overline{\left\{F\mid \exists p,q\in X \quad\text{such that}\quad  \Pp T_pX,\Pp T_qX\subset F \right\}}.
\end{align*}
\end{definition}
We will show the following in section \ref{genericnodesection}:
\begin{theorem}\label{resultnode}
For $k\geq 3$ and $N\geq 9$ or $k\geq 4$ and $N\geq 2k$, $X^{\vee}_{node}$ is an irreducible hypersurface in $X^{\vee}$. 
\end{theorem}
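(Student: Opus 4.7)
The plan is to realize $X^{\vee}_{node}$ as the image of a $GL(N)$-equivariant incidence variety, irreducible over the generic orbit on pairs of distinct points, and then verify the hypersurface condition by a dimension count. Introduce
\begin{align*}
W := \overline{\left\{(F,p,q)\in \cA\times X\times X \;:\; p\neq q,\; \Pp T_pX\subset F,\; \Pp T_qX\subset F\right\}},
\end{align*}
with projection $\pi:W\to \cA$. By Definition \ref{definitionofnode}, $\pi(W) = X^{\vee}_{node}$.

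For irreducibility I would exploit the $GL(N)$-action. Since $N\geq 2k$, the orbits on $X\times X \setminus \Delta$ are stratified by $d:=\dim(L_p\cap L_q)\in\{0,\ldots,k-1\}$, with the open dense orbit $U$ given by $d=0$. Using the identification of the affine tangent space $\hat{T}_pX = \bigwedge^{k-1}L_p\wedge V\subset \bigwedge^kV$ of dimension $k(N-k)+1$, a Pl\"{u}cker basis vector $e_I$ lies in $\hat{T}_pX\cap \hat{T}_qX$ only when $|I\cap L_p|\geq k-1$ and $|I\cap L_q|\geq k-1$. For $k\geq 3$ and $L_p\cap L_q=0$, this is impossible since $|I|=k<2(k-1)$; hence $\hat{T}_pX\cap\hat{T}_qX=0$ and
\begin{align*}
\dim(\hat{T}_pX+\hat{T}_qX) = 2k(N-k)+2.
\end{align*}
The fiber of $\pi$ over any $(p,q)\in U$ is thus the linear subspace of $\cA$ of constant dimension $\binom{N}{k}-2k(N-k)-2$, making $W^{\circ}:=W\cap(\cA\times U)$ a vector bundle over the irreducible homogeneous base $U$, hence irreducible. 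The image $\overline{\pi(W^\circ)}$ is therefore an irreducible closed subvariety of $X^{\vee}_{node}$.

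For the hypersurface claim, compute
\begin{align*}
\dim W^{\circ} = 2k(N-k) + \binom{N}{k} - 2k(N-k) - 2 = \binom{N}{k}-2,
\end{align*}
so projectively $\dim\Pp(W^\circ) = \binom{N}{k}-3$, exactly one less than $\dim X^{\vee} = \binom{N}{k}-2$. To conclude I need $\pi|_{W^\circ}$ to be generically finite onto its image: a generic $F\in \overline{\pi(W^\circ)}$ lies outside $X^{\vee}_{cusp}$, so its Hessian is nondegenerate at each critical point, which forces the critical set to be finite, and the involution $(p,q)\mapsto (q,p)$ then produces a generic fiber of size two. Combined with the inclusion $\overline{\pi(W^\circ)}\subset X^{\vee}_{node}$, a $GL(N)$-equivariance argument — no $GL(N)$-invariant irreducible component of $X^{\vee}_{node}$ can have generic point tangent only along non-generic pairs, by semi-continuity of critical points — yields $X^{\vee}_{node} = \overline{\pi(W^\circ)}$, irreducible and of dimension $\binom{N}{k}-3$, a hypersurface in $X^{\vee}$.

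The main obstacle is verifying generic finiteness of $\pi|_{W^\circ}$ and ruling out spurious components arising from the strata over the non-generic orbits $\{d\geq 1\}$: one must show that such strata either have strictly smaller dimension or lie in $\overline{\pi(W^\circ)}$. The numerical constraints $k\geq 3,\,N\geq 9$ or $k\geq 4,\,N\geq 2k$ are precisely where these arguments go through — outside this range, either $X^{\vee}$ fails to be a hypersurface (as in $G(2,4)$, $G(3,6)$, and a few other small cases) or the open-orbit fiber degenerates in a way that produces extra components.
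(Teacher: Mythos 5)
Your framework — an incidence variety fibered over the open $GL(N)$-orbit in $X\times X\setminus\Delta$, irreducibility from the vector-bundle structure, and the dimension count $\dim W^\circ = \binom{N}{k}-2$ — is essentially the paper's, and the computation that $\hat T_p X\cap\hat T_q X=0$ when $L_p\cap L_q=0$ (because tangent monomials would need at least $2(k-1)>k$ indices) is correct. But the two steps you flag as "the main obstacle" and then dispatch in a sentence are in fact the entire content of the theorem, and your treatment of them has a genuine gap.

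First, generic finiteness of $\pi|_{W^\circ}$. You assert that a generic $F\in\overline{\pi(W^\circ)}$ lies outside $X^{\vee}_{cusp}$. This is not automatic; it is precisely what fails for $k=3$, $N\in\{6,7,8\}$, where $\nabla_{node}(\emptyset)\subset X^{\vee}_{cusp}$ and the node stratum contributes no new component. In fact, $F\notin X^{\vee}_{cusp}$ is \emph{equivalent} to the statement that the Hessians $H(x^0)$ and $H(x')$ of a generic $A\in\nabla(x^0)\cap\nabla(x')$ are simultaneously invertible, and these two matrices are not independent: they share entries (none for $k\geq5$, a $36$-entry overlap for $k=4$, and for $k=3$ a family of row/column identifications recorded in conditions \ref{c1}--\ref{c4} of the paper). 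The paper's proof of Theorem \ref{thmnode} consists of exhibiting, case by case, pairs $(H(x^0),H(x'))$ satisfying these compatibility constraints with both determinants nonzero — by explicit matrices for $(k,N)\in\{(3,9),(3,10),(3,11),(4,8),(5,10)\}$ together with the specialization lemmas of Section \ref{hessian} — and this is exactly where the numerical hypotheses $k\geq3,\,N\geq9$ or $k\geq4,\,N\geq2k$ enter. Without this construction your dimension count only yields $\dim\overline{\pi(W^\circ)}\leq\binom{N}{k}-2$, not equality.

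Second, the lower strata $d=\dim(L_p\cap L_q)\geq 1$. Your claim that "no $GL(N)$-invariant irreducible component of $X^{\vee}_{node}$ can have generic point tangent only along non-generic pairs, by semi-continuity of critical points" is not an argument; there is no such semi-continuity principle that rules out components supported on the smaller orbits. The paper settles this in Section \ref{specnode} by a degeneration argument: for $d=k-1$ the stratum $\nabla_{node}(J)$ lies in $X^{\vee}_{cusp}$ (Proposition \ref{nodesubvarietycusp}), while for $d\leq k-2$ the one-parameter family $x(J,T)$ exhibits $\nabla_{node}(J)$ as a limit inside $\nabla_{node}(\emptyset)$ (Lemma \ref{nodesubvarietynode} and the following lemma). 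Both of these require the explicit combinatorics of replacement indices and are not reducible to a soft equivariance principle.

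So the skeleton of your argument matches the paper's, but the load-bearing steps are asserted rather than proven; to complete the proof you would need, in effect, to reproduce Sections \ref{specnode} and \ref{hessian} of the paper.
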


It is well known that under the $SL_N$ action the space $\bigwedge^3\Cc^N$ for $N=6,7,8$ has finitely many orbits \cite{KW1}, \cite{KW2}. In particular they are prehomogeneous vector spaces, see \cite{kimura} for the theory. For those, we show that $X^{\vee}_{sing}=X^{\vee}_{cusp}$ by simply checking the orbit representatives in the section \ref{decompositionofsingularlocus}. For the other cases we show that $X^{\vee}_{cusp}\neq X^{\vee}_{node}$ in theorem \ref{cuspnodedifferent}.


Now we present the main theorem by putting together all the above results:
\begin{theorem*}[MAIN]
The singular locus of the dual Grassmannian $G(k,\Cc^N)^{\vee}$  is of codimension two in $\bigwedge^k\Cc^N$ for $N\geq 2k$, $k\geq 4$ or $N\geq 9$ and $k=3$. For these cases singular locus $X^{\vee}_{sing}$ has two components $X^{\vee}_{node}$ and $X^{\vee}_{cusp}$ which are irreducible, both having codimension one in $X^{\vee}$.
For the cases $k=3$ and $N=6,7,8$, the codimensions of $X^{\vee}_{cusp}$ in $X^{\vee}$ are $4,3,2$ respectively and $X^{\vee}_{node}$ is a subvariety of $X^{\vee}_{cusp}$. 
\end{theorem*}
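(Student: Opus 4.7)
The plan is to treat the MAIN theorem as an assembly theorem: nothing new needs to be proved beyond what has been announced in the cusp/node theorems, provided one first establishes the set-theoretic identity
\begin{equation*}
X^{\vee}_{sing} \;=\; X^{\vee}_{cusp} \,\cup\, X^{\vee}_{node}.
\end{equation*}
So I would split the argument into three stages: justify this identity, synthesize the generic case, then handle the three exceptional cases.

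For the identity, I would work with the incidence variety $Z$ of Remark \ref{array} and its projection $\pi_1 \colon Z \to X^{\vee}$. Away from the critical locus of $\pi_1$, the map is \'etale, and $X^{\vee}$ is smooth at $F$ exactly when $\pi_1^{-1}(F)$ consists of a single reduced point. There are only two ways this can fail: the fiber can contain more than one point (which is the condition defining $X^{\vee}_{node}$), or it can contain a single non-reduced point where $d\pi_1$ drops rank. Because the fiber of $\pi_1$ over $F$ parametrizes the critical points of the form $F(A,x)$, the second condition is exactly the vanishing of the Jacobian of the critical-point equations, which by the local description of $X$ in Pl\"ucker coordinates is the Hessian $H(F)$ defined in \eqref{h1}. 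This gives $X^{\vee}_{sing} = X^{\vee}_{cusp} \cup X^{\vee}_{node}$.

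With this in hand, the generic case ($k \geq 4$, $N \geq 2k$ or $k=3$, $N \geq 9$) is immediate: the cusp irreducibility theorem and Theorem \ref{resultnode} give that $X^{\vee}_{cusp}$ and $X^{\vee}_{node}$ are irreducible hypersurfaces in $X^{\vee}$, Theorem \ref{cuspnodedifferent} shows that neither is contained in the other, and hence the two sets are exactly the two irreducible components of $X^{\vee}_{sing}$. Because $X^{\vee}$ itself is a hypersurface in $\bigwedge^k \Cc^N$, each component has codimension $2$ in the ambient space, which is the statement of the first paragraph of the theorem.

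For the three exceptional cases $(k,N) \in \{(3,6),(3,7),(3,8)\}$, the strategy is finite orbit analysis. Since $\bigwedge^3 \Cc^N$ is a prehomogeneous vector space under $SL_N$ with finitely many orbits (\cite{KW1}, \cite{KW2}), and both $X^{\vee}_{cusp}$, $X^{\vee}_{node}$ are $SL_N$-stable, each is a union of finitely many orbit closures. I would pick a representative of each orbit, determine whether it admits one or several critical points (testing membership in $X^{\vee}_{node}$) and whether its Hessian degenerates (testing membership in $X^{\vee}_{cusp}$), and read off the codimensions $4,3,2$ of $X^{\vee}_{cusp}$ in $X^{\vee}$ directly from the orbit dimensions computed by Kac--Vinberg--Weyman. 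In the same case-by-case list one observes that every representative with multiple critical points already has degenerate Hessian, yielding the inclusion $X^{\vee}_{node} \subseteq X^{\vee}_{cusp}$. The hardest step here is the second one: matching orbits of $SL_N$ on $\bigwedge^3\Cc^N$ with the geometric conditions ``one critical point'' vs.\ ``several critical points'' is delicate because the critical-point locus is cut out by degenerate ideals whose primary decomposition jumps as one degenerates from a generic orbit to a boundary orbit, and confirming that no non-obvious orbit enlarges the node component beyond the cusp component is what really drives the exceptional portion of the theorem.
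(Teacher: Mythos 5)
Your overall architecture mirrors the paper's exactly: prove $X^{\vee}_{sing}=X^{\vee}_{cusp}\cup X^{\vee}_{node}$, feed in the irreducibility and separation theorems from the earlier sections for the generic range, and dispatch the exceptional $(3,N)$, $N\in\{6,7,8\}$, cases by orbit enumeration. The synthesis step and the exceptional-case plan are fine and match what the paper does in Section \ref{decompositionofsingularlocus}.

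The genuine gap is in your justification of the identity. You assert the biconditional ``$X^{\vee}$ is smooth at $F$ exactly when $\pi_1^{-1}(F)$ is a single reduced point,'' but your \'etale argument only supports the \emph{if} direction: a single reduced fiber away from the critical locus of $\pi_1$ forces a local isomorphism, hence smoothness. That is Katz's contribution (quoted in the paper as \cite{WZ}, Proposition~6.1) and it yields $X^{\vee}_{sing}\subseteq X^{\vee}_{cusp}\cup X^{\vee}_{node}$. The reverse inclusion, that every point of the node or cusp locus is actually singular, is a separate matter and does not follow from unramifiedness of $\pi_1$. Two things are needed. First, one has to know that the generic point of each of $X^{\vee}_{cusp}$, $X^{\vee}_{node}$ has only \emph{finitely many} critical points; the locus of $A$ with infinitely many critical points has codimension $\geq 2$ (\cite{WZ}, Lemma 6.2), and since each component is shown to have codimension one, this is automatic at a generic point — but it must be stated, since otherwise neither Dimca's formula nor a Zariski-main-theorem style argument applies. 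Second, granting finiteness, one needs a quantitative statement that multiple critical points or a degenerate Hessian actually raise the multiplicity of $X^{\vee}$ above $1$. The paper uses Dimca's formula $\operatorname{mult}_A(X^{\vee})=\sum_{x}\mu(X\cap H_A,x)$ (\cite{WZ}, Proposition 6.3), which gives $\operatorname{mult}_A\geq 2$ exactly on $X^{\vee}_{cusp}\cup X^{\vee}_{node}$ and hence singularity there. Your write-up treats this direction as automatic; as stated it is not, and without the finiteness hypothesis the implication can actually fail. Fill these two gaps and your argument is the paper's argument.
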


 Here we list a brief account of some earlier works in the literature about the dual Grassmannian varieties including some versions of the main theorem. In this work we focus on the case $k\geq 3$, since if $k=2$ the dual variety is degenerate skew symmetric matrices, i.e. vanishing locus of pfaffians which is a well known case. Here the space of arrays $\cA$ [\ref{array}] is simply skewsymmetric matrices. It was shown in \cite{knop} that the dual Grassmannian is hypersurface for all $k,N$ except $k=2$ and $N$ is odd. We reproduce that observation in section \ref{hessian} by using a criteria due to Katz \cite{katz} and its generalization in \cite{WZ2} about Hessian matrices. It is enough to work with $N\geq 2k$ because of the isomorphism $\bigwedge^k\Cc^N\cong\bigwedge^{N-k}(\Cc^{N})^{\ast}$. The degree of the defining equations was computed by A. Lascoux \cite{lasc}. The result: the dual Grassmannian $G(k,\Cc^N)^{\vee}$, $k\geq 3$ is normal if and only if $k=3$ and $N=6,7,8$ was proven in \cite{hol} and the method is based on the calculation of the dimension of the secant variety of $G(k,\Cc^N)^{\vee}$. We found an earlier work \cite{maeda} then \cite{hol} in which similar results are obtained without specifying dimensions.

Organization of the paper is: in the first section we study the cusp component. Sections \ref{genericnodesection} and \ref{specnode} are for the node component. Their relation is given in the section \ref{decompositionofsingularlocus}. The last section is devoted to determinants of Hessian matrices. The style is very close to "singularities of hyperdeterminants \cite{WZ}". We do not hesitate to use the notation developed in it and to modify their results whenever it is possible.
\subsection{Acknowledgments} 
The results of this paper were obtained during my Ph.D. studies at Northeastern University and partially supported from the funding of Andrei Zelevinsky, NSF 1103813. I am indebted to my supervisors Kiyoshi Igusa, Gordana Todorov and Jerzy Weyman for their support,  endless patience and encouragement for this work. I appreciate the helpful comments of Anthony Iarrobino and Venkatramani Lakshmibai for the earlier versions of this manuscript. I would like to thank Alexander Klyachko for introducing me to this field.

\section{Cusp Type Singularities}\label{cuspsection}

In this section we study the cusp component. It is well known that the dimension of a variety at a smooth point is the dimension of the tangent space, hence we will compare the dimensions of the tangent spaces of $X^{\vee}$ and $X^{\vee}_{cusp}$ at a point $A\in\cA$.

Let $Y$ be the span of $k$ linearly independent vectors, i.e. $\langle v_1,\ldots,v_k\rangle\in Y$. Consider the subset of $Y$
\begin{align}\label{affineE}
E=\left\{\langle v_1,\ldots,v_k\rangle\in Y\vert v_i=e_i+\sum\limits_{k<j} x^i_je_j\right\}
\end{align} where $\left\{e_i\right\}_{1\leq i\leq N}$ is the standard basis of $\Cc^N$. Notice that the image of the set $E$ under the Pl\"{u}cker map gives the chart in $\Pp\left(\bigwedge^k\Cc^N\right)$ around the point $[1:0\ldots:0]$. When all coordinates are zero, we denote the point by $x^0$ i.e. $x^0=\langle e_1\ldots e_k\rangle$. It will be convenient for us to dehomogenize the multilinear form $F\left(A,x\right)$ corresponding to $A\in\cA$ by setting all $x^{j}_i=\delta_{i,j}$ in \ref{K} where $i,j\leq k$. More precisely, we restrict the form $F$ onto $E$ i.e. $F(A,v_1\wedge\ldots\wedge v_k)$, where $v_1\wedge\ldots\wedge v_k\in E$ and it becomes:

\begin{align}
a_{12\ldots k}+\sum\limits_{k< j} a_{12\ldots\hat{j}\ldots k}x^p_j+ \sum\limits_{k<j<j'} a_{12\ldots\hat{j}\ldots\hat{j'}\ldots k}\left(x^p_jx^{p'}_{j'}-x^p_{j'}x^{p'}_{j}\right)+\ldots
\end{align}
where $ a_{12\ldots\hat{j}\ldots k}$ and $a_{12\ldots\hat{j}\ldots\hat{j'}\ldots k}$ are the coefficients which we replace the positions of $p$ and $p'$ by $j,j'$ respectively. Since the notation is not easy to follow, we introduce the following:

\begin{notation}\label{not1}
We use: 
\begin{itemize}
\item $a:=a_{12\ldots k}$
\item $a^t_p:=a_{12\ldots\hat{t}\ldots k}$ where $t$ is the value of the position $p$.
\item In general: $a^{t_1t_2\ldots t_r}_{p_1p_2\ldots p_r}:= a_{\hat{t_1}\ldots\hat{t_2}\ldots\hat{t_r}\ldots }$ where each $p_i$, $1\leq p_i\leq k$ denotes the position of the index $t_i$, $k+1\leq t_i\leq N$.
\item For a given unordered index set $\left(i_1,i_2,\ldots,i_r\right)$, let $\sigma\left(i_1,i_2,\ldots,i_r\right)=(-1)^{\tau\left(i_1,i_2,\ldots,i_r\right)}$, where $\tau$ gives the number of transpositions to convert  $\left(i_1,i_2,\ldots,i_r\right)$ into ordered tuple. Therefore $a^{t_1t_2\ldots t_r}_{p_1p_2\ldots p_r}=\sigma(\hat{t_1}\ldots\hat{t_2}\ldots\hat{t_r})a^{t^{'}_1 t^{'}_2\ldots t^{'}_r}_{p_1p_2\ldots p_r} $ with $t^{'}_1<\ldots <t^{'}_r$ and $\left\{t_1,\ldots,t_r\right\}=\left\{t^{'}_1,\ldots,t^{'}_r\right\}$

\end{itemize}
\end{notation}

For example we will write the coefficient $a_{1,2,\ldots,t,\ldots,t',\ldots,k}$ of the quadratic term as $a^{tt'}_{pp'}$ where the positions of $t$ and $t'$ in $\left(1,\ldots,t,\ldots,t'\ldots,k\right)$ are $p$ and $p'$ respectively. By abuse of notation we do not add "$-$" signs, it is clear that $a^{tt'}_{pp'}=-a^{t't}_{pp'}$. Indeed $\cA$ [\ref{array}] is the space of skew symmetric arrays.

Under the new notation, the form $F$ restricted to $E$ can be written as:
\begin{align}\label{dehomform}
F\left(A,x\right)=a+\sum\limits_{p,t}a^{t}_{p}x^{p}_{t}+\sum\limits_{t,t^{\prime},p,p^{\prime}}a^{tt^{\prime}}_{pp^{\prime}}x^{p}_{t}x^{p^{\prime}}_{t^{\prime}}+\ldots
\end{align}

\begin{example}\label{example} Let $X^{\vee}=G(3,\Cc^6)^{\vee}$. Form $F$ on $E$:
\begin{gather*}
a+\left[a^4_3x^3_4+a^5_3x^3_5+a^6_3x^3_6-a^4_2x^2_4-a^5_2x^2_5-a^6_2x^2_6+a^4_1x^1_4+a^5_1x^1_5+a^6_1x^1_6\right] \\ 
a^{45}_{23}(x^2_4x^3_5-x^3_4x^2_5)+a^{46}_{23}(x^2_4x^3_6-x^2_6x^3_4)+a^{56}_{23}(x^2_5x^3_6-x^2_6x^3_5)\\-a^{45}_{13}(x^1_4x^3_6-x^1_6x^3_4)-a^{46}_{13}(x^1_4x^3_6-x^1_6x^3_4)-a^{56}_{13}(x^1_5x^2_6-x^1_6x^2_5)\\+a^{45}_{12}(x^1_4x^2_5-x^1_5x^2_4)+a^{46}_{12}(x^1_4x^2_6-x^1_6x^2_4)+a^{56}_{12}(x^1_5x^2_6-x^1_6x^2_5)\\+a^{456}_{123}(x^1_4x^2_5x^3_6-x^1_4x^2_6x^3_5+x^1_5x^2_6x^3_4-x^1_5x^2_4x^3_6+x^1_6x^2_4x^3_5-x^1_6x^2_5x^3_4)
\end{gather*}
\end{example}

Let $\nabla\left(x\right)$ be the set of arrays $A$ in $\cA$ having $x$ as a critical point i.e.
\begin{align}\label{nablax}
\nabla\left(x\right)=\left\{A\vert F\left(A,x\right)\in X^{\vee}\right\}\end{align}
In particular: 
\begin{align*}
\nabla\left(x^0\right)&=\left\{A\vert F\left(A,x^0\right)\in X^{\vee}\right\}\\
&=\left\{A\vert F\left(A,x^0\right)=\cfrac{\partial F(A,x^0)}{\partial x^i_j}=0,\,\,\forall i,j\, \right\}\\
&=\left\{A\vert a=0,\,\,a^t_p=0,\,\,\forall t,p\right\}
\end{align*}
Consider the Hessian matrix of the dehomogenized form $F$ \ref{dehomform}  evaluated at $x^0$:
\begin{center}
$H(F(A,x))\vert_{x^0}=\vert\vert \cfrac{\partial^2 F}{\partial x^i_j\partial x^{i'}_{j'}}\vert\vert_{x^0}$
\end{center}
The nonzero entries of the Hessian matrix \ref{h1} evaluated at $x^0$ are the coefficients $a^{tt'}_{pp'}$ of the form $F$ \ref{dehomform}.
\begin{example} The Hessian matrix of the form $F$ in the previous example \ref{example} is the following $9\times 9$ symmetric matrix with skew symmetric blocks:\\

  \begin{center}
  $\left[ \begin {array}{ccc|ccc|ccc} 0&0&0&0&a_{{12}}^{45}&a_{{12}}^{46}&0&-a_{{13}}^{45}&-a_{{13}}^{46}\\ 0&0&0&-a_{
{12}}^{45}&0&a_{{12}}^{56}&a_{{13}}^{45}&0&-a_{{13}}^{56}
\\0&0&0&-a_{{12}}^{46}&-a_{{12}}^{56}&0&a_{{13}
}^{46}&a_{{13}}^{56}&0\\\hline 
0&-a_{{12}}^{45}&-a_{{
12}}^{46}&0&0&0&0&a_{{23}}^{45}&a_{{23}}^{46}\\
a_{{12}}^{45}&0&-a_{{12}}^{56}&0&0&0&-a_{{23}}^{45}&0&a_{{23}}
^{56}\\ a_{{12}}^{46}&a_{{12}}^{56}&0&0&0&0&-a_{
{23}}^{46}&-a_{{23}}^{56}&0\\ \hline 0&a_{{13}}^{45}&a
_{{13}}^{46}&0&-a_{{23}}^{45}&-a_{{23}}^{46}&0&0&0
\\-a_{{13}}^{45}&0&a_{{13}}^{56}&a_{{23}}^{45}
&0&-a_{{23}}^{56}&0&0&0\\-a_{{13}}^{46}&-a_{{13
}}^{56}&0&a_{{23}}^{46}&a_{{23}}^{56}&0&0&0&0\end {array}
 \right] 
$
  \end{center}
  
\end{example}

   In this set up the cusp variety can be expressed as:
\begin{align}
\nabla^{0}_{cusp}=\left\{A\in\nabla\left(x^0\right)\vert \det H\left(F(A,x)\vert_{x^0}\right)=0\right\}
\end{align}
and by the action of group $G$: $X^{\vee}_{cusp}=\nabla^{0}_{cusp}\cdot G$. 
\begin{proposition}
The cusp variety $X^{\vee}_{cusp}$ is irreducible.
\end{proposition}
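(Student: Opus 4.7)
My plan is to reduce the irreducibility of $X^{\vee}_{cusp}$ to the irreducibility of a single determinantal hypersurface and then spread the result along the $G$-orbit. First I would analyze the local model $\nabla^0_{cusp}\subset\nabla(x^0)$. Since $\nabla(x^0)$ is the linear subspace of $\cA$ cut out by $a=0$ and all $a^t_p=0$, it decomposes canonically as a direct sum $\bigoplus_{r\geq 2}\cA_r$, where $\cA_r$ is the span of the coefficients $a^{t_1\ldots t_r}_{p_1\ldots p_r}$ of polynomial degree $r$ in the local coordinates $x^p_t$. The crucial observation is that the Hessian $H(F(A,x))|_{x^0}$ is evaluated at the origin of the affine chart $E$, so only the quadratic coefficients $a^{tt'}_{pp'}\in\cA_2$ appear as its entries; the higher-order coefficients drop out.

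Consequently the cusp locus is a cylinder
\[
\nabla^{0}_{cusp}\;=\;\{A\in\cA_2\mid \det H(A)=0\}\;\times\;\bigoplus_{r\geq 3}\cA_r,
\]
and the second factor is an affine space. Hence irreducibility of $\nabla^0_{cusp}$ reduces to irreducibility of the Hessian hypersurface $\{\det H=0\}$ inside the coordinate space $\cA_2$. At this point I would invoke the structural result on Hessian determinants proved in Section \ref{hessian} (Theorem \ref{determinant}), which asserts that $\det H$ is an irreducible polynomial in the variables $a^{tt'}_{pp'}$; this delivers the required irreducibility of $\nabla^{0}_{cusp}$ directly.

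Finally I would globalize by means of the $G=GL(V)$-action. The morphism
\[
G\times\nabla^{0}_{cusp}\;\longrightarrow\;\cA,\qquad (g,A)\;\longmapsto\;A\cdot g,
\]
has image equal to $X^{\vee}_{cusp}$ by definition of the latter. Since $G$ is irreducible, a product of irreducible varieties over $\Cc$ is irreducible, and the image of an irreducible variety under a morphism is irreducible, one concludes $X^{\vee}_{cusp}$ is irreducible.

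The hard part is the polynomial irreducibility of $\det H$ used in the middle step. The Hessian matrix has a very rigid structure — a skew-symmetric arrangement of blocks which are themselves skew-symmetric, as illustrated in the $G(3,\Cc^6)$ example — and one must rule out any factorization of the determinant along a natural sub-block decomposition or along $SL$-invariant subrepresentations of $\cA_2$. That is precisely why the matter is postponed to the self-contained linear-algebra discussion of Section \ref{hessian}, leaving the present proposition as a formal consequence of Theorem \ref{determinant} combined with the cylinder-plus-orbit argument above.
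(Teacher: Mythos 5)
Your proposal takes essentially the same route as the paper: reduce to irreducibility of $\nabla^0_{cusp}$, reduce that to the factorization properties of $\det H$ from Section~\ref{hessian}, and then globalize by the $G$-action. You make explicit two steps the paper leaves implicit — the cylinder decomposition of $\nabla^0_{cusp}$ as a hypersurface in $\cA_2$ times the affine space of higher-degree coefficients, and the ``image of an irreducible variety under a morphism is irreducible'' argument for spreading along the orbit — which is a welcome clarification.

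One inaccuracy: you assert that Theorem~\ref{determinant} says $\det H$ is irreducible, but the theorem says this only for $N\geq 2k$, $k\geq 3$ with $(k,N)\neq(3,6),(3,7)$. In the two exceptional cases $\det H_{3,6}$ is the cube, and $\det H_{3,7}$ the square, of an irreducible polynomial. This does not break your argument — the vanishing locus of $f^n$ coincides with that of $f$, so the hypersurface in $\cA_2$ is still irreducible — but as written your proof is silent on these two cases, whereas the paper explicitly handles them with exactly this observation.
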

\begin{proof}
It is enough to show that $\nabla^0_{cusp}$ is irreducible. This follows from the study of the polynomial factors of the determinant of the Hessian matrix. Proofs are given at the last section. Here, we just mention that, the determinant is irreducible for all $k=3, N\geq 8$ or $k\geq 4, N\geq 2k$. For $k=3, N=6,7$, it is the cube and the square of an irreducible polynomial respectively which implies irreducibility of the variety.
\end{proof}

Instead of working with the group $G=GL\left(\Cc^{N}\right)$, we use $E\subset G$ where $x\in E$ has the following structure:
\begin{align}
 \begin{bmatrix}
I_{k\times k}& 0 \\ 
M& I_{\left(N-k\right)\times\left(N-k\right)} \\
 \end{bmatrix}
\end{align}
where the first $k$ columns are transpose of the vectors given  in $E$ i.e. 
\begin{center}
$M=\begin{bmatrix}
x^{1}_{k+1} & x^2_{k+1}   &\cdots & x^k_{k+1}  \\ 
x^{1}_{k+2}    &   x^2_{k+2}       &\cdots &  x^{k}_{k+2} \\
\vdots    &\vdots    &\ddots & \vdots    \\ 
x^1_{N}& x^2_N    &\cdots & x^{k}_{N} 
 \end{bmatrix}$
\end{center}

$E$ acts transitively by translations on the affine space $E$ \ref{affineE}. The action on the coefficients i.e. $\cA$ is:
\begin{align}
\left(A\cdot x\right)^{t_1\ldots t_r}_{p_1\ldots p_r}=\cfrac{\partial^r F(A,x)}{\partial x^{p_1}_{t_1}\cdots \partial x^{p_r}_{t_r}}
\end{align}

Notice that for $g\in G$, $\nabla^0_{cusp}\cdot g$ only depends on the point $g^{-1}x^0$ since:
\begin{align}
A\cdot g\in\nabla(x^0)&\iff F(A\cdot g,x^0)\in X^{\vee}\\
&\iff F(A,g(g^{-1}x^0))\in X^{\vee}
\end{align}
Therefore $X^{\vee}_{cusp}=\overline{\nabla^0_{cusp}\cdot E}$.

\begin{theorem}\label{thmcusp}
The tangent space $T_A X^{\vee}_{cusp}$ of the cusp variety at the generic point $A$ of $\nabla^0_{cusp} $ has codimension $2$ in $T_A\cA$ i.e the tangent space of all arrays. 
\end{theorem}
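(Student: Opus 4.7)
The strategy is as follows. Since $X^{\vee}_{cusp}=\overline{\nabla^{0}_{cusp}\cdot E}$ and the orbit map $\mu\colon E\times\nabla^{0}_{cusp}\to X^{\vee}_{cusp}$ is dominant, at a generic $A\in\nabla^{0}_{cusp}$ the Zariski tangent space decomposes as $T_{A}X^{\vee}_{cusp}=T_{A}\nabla^{0}_{cusp}+\mathfrak{e}\cdot A$, where $\mathfrak{e}\cdot A$ is the image of the infinitesimal action map $\mathfrak{e}\to T_{A}\cA$. By the preceding proposition, $\nabla^{0}_{cusp}$ is irreducible and smooth at its generic point, so $T_{A}\nabla^{0}_{cusp}$ has codimension $2+k(N-k)$ in $T_{A}\cA$, cut out by $\delta a=0$, $\delta a^{t}_{p}=0$ for all $t,p$, and $d(\det H)|_{A}\cdot\delta=0$. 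Since $\dim\mathfrak{e}=k(N-k)$, the codimension-$2$ assertion is equivalent to injectivity of the induced linear map $\phi\colon\mathfrak{e}\to T_{A}\cA/T_{A}\nabla^{0}_{cusp}$.

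To analyze $\phi$, decompose $T_{A}\cA=\bigoplus_{r=0}^{k}L_{r}$ according to the number of upper indices. Using the identity $(A\cdot x)^{t_{1}\ldots t_{r}}_{p_{1}\ldots p_{r}}=\partial^{r}F(A,x)/\partial x^{p_{1}}_{t_{1}}\cdots\partial x^{p_{r}}_{t_{r}}$ and the defining vanishings $a=a^{t}_{p}=0$ at $A\in\nabla(x^{0})$, differentiating in a direction $\xi\in\mathfrak{e}$ gives $(\xi\cdot A)_{L_{0}}=0$, $(\xi\cdot A)_{L_{1}}=H\xi$, and, up to a combinatorial constant, $(\xi\cdot A)^{tt'}_{pp'}=\sum_{s,q}\xi^{q}_{s}\,a^{tt's}_{pp'q}$ in $L_{2}$. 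Hence $\phi(\xi)=0$ is equivalent to the two conditions $H\xi=0$ and $d(\det H)|_{A}\cdot(\xi\cdot A)_{L_{2}}=0$.

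At a generic $A\in\nabla^{0}_{cusp}$ the Hessian $H$ has corank exactly one, since $\det H$ is shown to be irreducible in section~\ref{hessian} under the hypotheses considered. Let $v$ span $\ker H$; then the adjugate of $H$ equals $\lambda\,vv^{T}$ for some nonzero $\lambda$, and therefore $d(\det H)|_{A}$, viewed as a linear functional on the symmetric forms $L_{2}$, is the map $\delta\mapsto\lambda\,v^{T}\delta v$. Writing $\xi=cv$ with $c\in\Cc$ and substituting the formula for $(\xi\cdot A)_{L_{2}}$, the obstruction reduces (up to a nonzero constant) to $c\cdot F_{3}(A)(v,v,v)$, where $F_{3}(A)$ denotes the degree-three Taylor part of $F(A,\cdot)$ at $x^{0}$, i.e.\ the cubic form on $\Cc^{k(N-k)}$ determined by the $L_{3}$-coefficients of $A$.

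The remaining step, which I expect to be the main obstacle, is to verify that $F_{3}(A)(v,v,v)\neq 0$ at a generic $A\in\nabla^{0}_{cusp}$. The kernel direction $v$ depends only on the $L_{2}$-part of $A$, while the $L_{3}$-part is entirely free on $\nabla^{0}_{cusp}$; for any fixed $L_{2}$-part with $\ker H\ne 0$, the assignment $F_{3}\mapsto F_{3}(v,v,v)$ is a nonzero linear functional on the $L_{3}$-stratum (since $v\neq 0$). Irreducibility of $\nabla^{0}_{cusp}$ then forces the vanishing locus of this functional to be a proper subvariety, so $F_{3}(A)(v,v,v)\ne 0$ at a generic point. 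This forces $c=0$, hence $\xi=0$, hence $\phi$ is injective, and therefore $T_{A}X^{\vee}_{cusp}$ has codimension exactly $2$ in $T_{A}\cA$.
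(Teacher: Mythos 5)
Your strategy is equivalent in substance to the paper's: both arguments reduce the codimension-$2$ claim to showing that the differential of $\det H$, applied to the second-order perturbation of $H$ produced by flowing the critical point along the kernel direction, does not vanish. You phrase this through $T_{A}X^{\vee}_{cusp}=T_{A}\nabla^{0}_{cusp}+\mathfrak{e}\cdot A$ and injectivity of $\phi$; the paper sets up the incidence variety $\tilde Z_{cusp}$ and its tangent equations and eliminates $dx$. Either way, the obstruction is $F_{3}(A)(v,v,v)$ with $v$ spanning $\ker H$. Your conceptual reformulation is cleaner and the steps through $\phi(\xi)=0\iff H\xi=0$ and $d(\det H)\cdot(\xi\cdot A)_{L_{2}}=0$, the corank-one adjugate $\lambda\,vv^{T}$, and the reduction of the obstruction to $c\cdot F_{3}(A)(v,v,v)$ are correct and match the computations implicit in the paper.

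The final step has a genuine gap. You claim that, with $L_{2}$ fixed and $v\neq 0$, the assignment $F_{3}\mapsto F_{3}(v,v,v)$ is a nonzero linear functional on the $L_{3}$-stratum. That inference is false for the Grassmannian: the cubic $F_{3}(A)$ is not a general cubic on $\Cc^{k(N-k)}$. It lies in the span of the $3\times 3$ minors of the $k\times(N-k)$ matrix of chart coordinates (this is exactly what the degree-$3$ part of the dehomogenized Pl\"ucker form looks like). Consequently $F_{3}(v,v,v)$ vanishes for every choice of $L_{3}$-coefficients whenever the $k\times(N-k)$ matrix $v$ has rank $\leq 2$, and the hypothesis $v\neq 0$ only guarantees rank $\geq 1$. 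What your argument actually requires is that, at a generic $A\in\nabla^{0}_{cusp}$, the kernel vector $v$ of the corank-one Hessian has rank $\geq 3$ as a $k\times(N-k)$ matrix. This is a nontrivial structural fact about these block-skew-symmetric Hessians and must be established separately; without it, irreducibility of $\nabla^{0}_{cusp}$ does not help, since the functional $F_{3}\mapsto F_{3}(v,v,v)$ could be identically zero along the whole component. The paper controls the kernel's structure through the explicit corank-one Hessians constructed in its Rank Lemmas subsection \ref{ranklemmas} (conditions on the row blocks), and your proof needs an analogous input before the ``$c=0$'' conclusion can be drawn.
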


\begin{proof} The first part of the proof work for any $k,N$. Consider the incidence variety:
\begin{center}\label{inc2}
 $\tilde{Z}=\left\{\left(A,x\right)\in\cA\times E\vert A\cdot x\in\nabla\left(x^0\right)\right\}$
 \end{center}
By the definition of $\nabla(x^0)$ and the group action $E$ on $\cA$, $T_{(A,x^0)}\tilde{Z}$ is given by the equations:
\begin{align}\label{eq1*}
da=0,\,\, da^t_p+\sum\limits_{t^{'},p^{'}} a^{tt'}_{pp'}dx^{p'}_{t'}=0
\end{align}
where the differentials $da,da^t_p,\ldots$ and $dx^p_t$ are the usual coordinates in $T_A\cA$ and $T_{x^0}E$ respectively. This is because of :
\begin{center}
$A\cdot x\in\nabla(x^0)\iff F(A,X)\vert_{x^0}=0,\,\cfrac{\partial}{\partial x^i_j}F(A,x)\vert_{x^0}=0  $
\end{center}
so the differentials $dF(A,X)\vert_{x^0}=0,\,d\left(\cfrac{\partial}{\partial x^i_j}F(A,x)\right)\vert_{x^0}=0 $ give the equations \ref{eq1*}.\\
Since the vanishing locus of determinants of Hessian matrices is the cusp variety, we obtain it as a subvariety of $\tilde{Z}$ i.e.
\begin{align}\label{inccusp}
\tilde{Z}_{cusp}&=\left\{\left(A,x\right)\in\cA\times E\vert A\cdot x\in\nabla^0_{cusp}\right\}\\
&=\left\{\left(A,x\right)\in\tilde{Z}\vert \det H(F(A,x))\vert_{x^0}=0\right\}
\end{align}
Hence we obtain one more equation for the tangent space  $T_{(A,x^0)}\tilde{Z}$ which is the differential of the determinant i.e. $d\left(\det H(F(A,x))\right)\vert_{x^0}=0$
After expanding it, we get:

\begin{align}\label{eq2}
\sum\limits_{t,t',p,p'}\hat{C}^{pp'}_{tt'}\left(da^{tt'}_{pp'}+\sum\limits_{t^{''},p^{''}}a^{tt't^{''}}_{pp'p^{\prime\prime}}dx^{p^{\prime\prime}}_{t^{''}}\right)=0
\end{align}

where $\hat{C}^{pp'}_{tt'}$ is the cofactor of the Hessian matrix corresponding the entry $a^{tt'}_{pp'}$. By the theorem \ref{hessian}, it is irreducible except $(k,N)=(3,6),(3,7)$.

It is clear that the projection onto the first factor in \ref{inccusp} gives the tangent space at the cusp variety i.e.
\begin{center}
$\pi_1\left(T_{(A,x^0)}\tilde{Z}_{cusp}\right)=T_AX^{\vee}_{cusp}$
\end{center}
As a result, the tangent space of $X^{\vee}_{cusp}$ at the point $A\in\nabla^0_{cusp}$ is given by the linear equations between coordinates $da,da^t_p,\ldots$, that are the consequences of \ref{eq1*}, \ref{eq2}. One of them is $da=0$.
We can choose $A\in\cA$ such that the Hessian matrix $H(F(A,x))$ of the form associated to $A$ is of corank one as described in the subsection \ref{ranklemmas}. Therefore the system of equations in \ref{eq1*} is equivalent to matrix equation:
\begin{center}
$H(F(A,x))\vert_{x^0}\left[dx^p_t\right]=-\left[da^t_p\right]$
\end{center}
Now, we assume that the variety is not exceptional i.e. $(k,N)\notin\left\{(3,6),(3,7),(3,8)\right\}$. By the construction in \ref{ranklemmas}, there is  Hessian matrix  $H(F(A,x))\vert_{x^0}$ of corank one, its adjoint matrix $Adj(H(F(A,x))\vert_{x^0})$ is of rank one, and their product is zero matrix obviously. Therefore:
\begin{align}
Adj(H(F(A,x))\vert_{x^0})\left[da^t_p\right]=0
\end{align}
which gives one relation between coordinates of $da^t_p$.\\

Now, we show that equation \ref{eq2} cannot produce additional relations between the coordinates $da, da^t_p,da^{tt'}_{pp'},\ldots$ etc. Assume to the contrary that \ref{eq2} is in the linear span of \ref{eq1*} i.e. 
\begin{center}
$d\left(\det H(F(A,x))\right)\vert_{x^0}+\sum\limits_{p,t}c^p_t\left(da^t_p+\sum\limits_{t^{'},p^{'}} a^{tt'}_{pp'}dx^{p'}_{t'}\right)=0$
\end{center}
for some coefficients $c^p_t$. Since we are looking relations between the coordinates $da, da^t_p,\ldots$ not involving $dx^i_j$, the coefficient of $dx^{p''}_{t''}$ 
\begin{align}
\sum\limits_{t,t',p,p'}\hat{C}^{pp'}_{tt'}a^{tt't^{''}}_{pp'p^{\prime\prime}}+\sum\limits_{t,p}c^p_t\sum\limits_{t',p'}a^{t't''}_{p'p''}
\end{align}
has to be zero for all $t'',p''$. We multiply it by coefficients $c^{p''}_{t''}$ and take sum over all indexes to get:
\begin{align}
\sum\limits_{t'',p''}\sum\limits_{t,t',p,p'}\left(\hat{C}^{pp'}_{tt'}a^{tt't^{''}}_{pp'p^{\prime\prime}}+c^p_ta^{t't''}_{p'p''}\right)c^{p''}_{t''}=\\
\sum\limits_{t,t',p,p',t'',p''}\hat{C}^{pp'}_{tt'}a^{tt't^{''}}_{pp'p^{\prime\prime}}c^{p''}_{t''}+\sum\limits_{t,t',p,p'} \sum\limits_{t'',p''}\left(a^{t't''}_{p'p''}c^{p''}_{t''}\right)
\end{align}
Notice that we do not have any condition on the coefficients $a^{tt't^{''}}_{pp'p^{\prime\prime}}$, we can choose them so that the first summand is nonzero. However, the second summand is the determinant of the Hessian matrix which is zero. This creates contradiction.
\end{proof}

\section{The Generic Node Component}\label{genericnodesection}
In this section we study the generic node component. Recall that in \ref{definitionofnode} we defined $X^{\vee}_{node}=\overline{\left\{F\mid \exists p,q\in X \quad\text{such that}\quad  \Pp T_pX,\Pp T_qX\subset F \right\}}.$ Another formulation is due to the incidence variety:
\begin{align}
Z^2:=\left\{\left(A,x,y\right)\in \cA\times X\times X\vert \left(A,x\right)\in Z,\,\left(A,y\right)\in Z,\,\text{and}\,\,x\neq y\right\}
\end{align}
Let $\pi_1$ be the projection of $Z^2$ onto the first coordinate i.e. $\pi_1(A,x,y)=A$, then $\nabla^0_{node}=\pi_1\left(Z^2\right)$. The closure $\overline{\nabla^0_{node}}$ is the node variety $X^{\vee}_{node}$. If $x$ and $y$ contains common vectors i.e. $x=\langle v_1,\ldots,v_k\rangle$, $y=\langle w_1\ldots,w_k\rangle$ and $\left\{v_1,\ldots,v_k\right\}\cap\left\{w_1,\ldots,w_k\right\}$ is non empty, we call the node variety 'special' and study them in the next section \ref{specnode}. In this chapter we assume that intersection empty and call it 'generic node component'. 
Recall that $\nabla\left(x^0\right)=\left\{A\vert F(A,x^0)\in X^{\vee}\right\}$ \ref{nablax}. Similar to the definition of $x^0$, let $x'$ be the point $e_{N-k+1}\wedge\ldots\wedge e_N$, so it is the point $[0:\ldots:0:1]$ in $\Pp\left(\bigwedge^k\Cc^N\right)$. Now we define the forms with critical point $x'$ i.e. $\nabla\left(x'\right)=\left\{A\vert F(A,x')\in X^{\vee}\right\}$. By using the group action of $G$ on $X\times X$ we consider:
\begin{align}
\nabla_{node}(\emptyset):=\overline{\left(\nabla(x^0)\cap\nabla(x')\right)\cdot G}
\end{align} 
i.e. forms such that they are tangent to $X$ at $x^0 $ and $x'$. 
\begin{remark} $\nabla_{node}(\emptyset)$ is the variety studied in \cite{hol}. 
\end{remark}
Another way to restate the definition of the generic node component is the following. Let $I^f$ and $I^{\ell}$ be the ordered sets $\left\{1,2,\ldots,k\right\}$ and $\left\{N-k+1,\ldots,N\right\}$ respectively. Recall that we chose $N\geq 2k$, which implies $I^f\cap I^{\ell}=\emptyset$. 
\begin{definition}\label{twosets} The generic node variety is the set of forms which are tangent to $X$ at $I^f$ and $I^{\ell}$ i.e:
\begin{align*}
\left\{\sum a_{i_1\ldots i_k}\eta_{i_1\ldots i_k}\mid a_{i_1\ldots i_k}=0, \text{when} \,\vert I^f\cap 
 \left\{i_1\ldots i_k\right\}\vert \geq k-1\, \text{or}\,\vert I^{\ell}\cap \left\{i_1\ldots i_k\right\} 
 \vert \geq k-1 \right\} 
\end{align*}

 for all possible indices $\left\{i_1\ldots i_k\right\}$. 
\end{definition}

As we did for the cusp component, we compute the dimension of the tangent space at a generic point of the intersection $\nabla(x^0)\cap\nabla(x')$.

\begin{theorem} \label{thmnode}
For all $k\geq 3$ and $N\geq 2k$, except $k=3, N=6,7,8$,  the tangent space $T_A\nabla_{node}(\emptyset)$ at the generic $A\in \nabla(x^0)\cap\nabla(x')$ is the subspace of codimension two in $T_A\cA$ given by equations 
\begin{align}
da_{12\ldots k}=0,\hspace{20pt} da_{N-k+1\ldots N}=0
\end{align}
For the exceptional cases:
\begin{align}
 \nabla_{node}(\emptyset)\subset X^{\vee}_{cusp}.
 \end{align}
\end{theorem}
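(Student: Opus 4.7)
I plan to mirror the incidence-variety approach of Theorem \ref{thmcusp}, enlarged to track both critical points simultaneously. Let $E'$ be the affine chart around $x' = e_{N-k+1} \wedge \cdots \wedge e_N$, built analogously to $E$ with local coordinates $y^{q}_{s}$ for $q \in \{N-k+1, \ldots, N\}$ and $s \notin \{N-k+1, \ldots, N\}$; since $N \geq 2k$ these are disjoint from the $x^p_t$ of $E$. Form
\[
\tilde Z^{2} := \{(A, x, y) \in \cA \times E \times E' \mid A \cdot x \in \nabla(x^0),\ A \cdot y \in \nabla(x')\},
\]
so that $\overline{\pi_1(\tilde Z^2)} = \nabla_{node}(\emptyset)$. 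At a point $(A, x^0, x')$, differentiating the two tangency conditions at each critical point yields the four linear families
\begin{gather*}
da_{12 \ldots k} = 0,\qquad da^t_p + \sum_{t', p'} a^{tt'}_{pp'}\, dx^{p'}_{t'} = 0,\\
da_{N-k+1 \ldots N} = 0,\qquad d\tilde a^{s}_{q} + \sum_{s', q'} \tilde a^{ss'}_{qq'}\, dy^{q'}_{s'} = 0,
\end{gather*}
where the tildes denote the analogues of Notation \ref{not1} at the index set $\{N-k+1, \ldots, N\}$. The tangent space $T_A \nabla_{node}(\emptyset)$ equals the image under $\pi_1$, namely the subspace of $T_A \cA$ cut out by the two visible $da_{\cdot} = 0$ equations together with any additional relations that survive the elimination of the $dx, dy$ variables.

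For $(k,N) \notin \{(3,6),(3,7),(3,8)\}$, the plan is to show no additional relations survive by producing $A \in \nabla(x^0) \cap \nabla(x')$ at which both Hessians $H(F(A,x))\vert_{x^0}$ and $H(F(A,y))\vert_{x'}$ are nonsingular. The ingredients are the irreducibility of $\det H$ in the entries $a^{tt'}_{pp'}$ (Theorem \ref{determinant}) and the full-rank specializations of subsection \ref{ranklemmas}. An entry $a^{tt'}_{pp'}$ of $H\vert_{x^0}$ corresponds to the coefficient indexed by $(I^f \setminus \{p,p'\}) \cup \{t,t'\}$, so the node constraint of Definition \ref{twosets} eliminates it only when $|\{t,t'\} \cap I^\ell| \geq k-1$. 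For $k \geq 4$ this is impossible, and for $k = 3$ with $N \geq 9$ the neutral index set $\{k+1,\ldots,N-k\}$ still has at least three elements, leaving enough free parameters to specialize $H\vert_{x^0}$, and independently $H\vert_{x'}$, to invertible matrices. Once both Hessians are invertible, the second and fourth equation families solve uniquely for the $dx^{p'}_{t'}$ and $dy^{q'}_{s'}$ coordinates, leaving exactly the two surviving relations $da_{12\ldots k} = da_{N-k+1\ldots N} = 0$.

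For the exceptional triples the plan is instead to prove automatic Hessian degeneracy. Under the node constraint, an entry $a^{tt'}_{pp'}$ of $H\vert_{x^0}$ survives only when $\{t, t'\} \subset \{k+1, \ldots, N\}$ has at most $k-2 = 1$ index in $I^\ell$. For $(3,6)$ no such pair exists and $H\vert_{x^0}$ vanishes identically; for $(3,7)$ every admissible pair must contain the unique neutral $4$, yielding a ``star'' block pattern of rank at most $6$ inside a $12 \times 12$ matrix; for $(3,8)$ every admissible pair contains at least one of the two neutrals $\{4,5\}$, yielding a bipartite block pattern of rank at most $12$ inside a $15 \times 15$ matrix. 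The resulting rank bound forces $\det H\vert_{x^0} = 0$ on all of $\nabla(x^0) \cap \nabla(x')$, so $A \in \nabla^0_{cusp}$ and the inclusion $\nabla_{node}(\emptyset) \subset X^{\vee}_{cusp}$ follows after saturating by the $G$-action.

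The principal obstacle is the simultaneous nondegeneracy step in the non-exceptional range: the two Hessians share certain entries (those indexed by coefficients whose support straddles both $I^f$ and $I^\ell$ in the middle range), so a naive independent specialization would be circular. The way out is to invoke the irreducibility of each $\det H$ as a polynomial in all relevant coefficients to see that the nonvanishing locus is a dense open in the node subspace; the exceptional cases are precisely those where this irreducibility either fails (through the factorizations recorded in Section \ref{hessian}) or is defeated by a collapse of free parameters, which is why they require the combinatorial degeneracy argument above.
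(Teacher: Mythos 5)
Your framework matches the paper's: the same incidence variety $\tilde{Z}^2$, the same linearization yielding the two pairs of equation families at $x^0$ and $x'$, and the same strategy of reducing the question to simultaneous nondegeneracy of the two Hessians. Your treatment of the exceptional triples is a genuine improvement over the paper's bare assertion that the constrained determinants vanish: the combinatorial rank bounds (rank $\leq 6$ of a $12\times 12$ matrix for $(3,7)$ because the nine rows indexed by $t\in I^\ell$ are supported on only three columns, and rank $\leq 12$ of a $15\times 15$ matrix for $(3,8)$ by the same ``bipartite support'' argument) are correct, elementary, and more informative. Similarly, for $k\geq 4$ your observation that the node constraints of Definition \ref{twosets} zero out \emph{no} entry $a^{tt'}_{pp'}$ of either Hessian cleanly gives that each restricted determinant is the full (nonzero) $\det H_{k,N}$, whence the generic simultaneous nonvanishing by an elementary ``two proper closed subsets'' argument; this is slightly slicker than the paper's exhibition of the explicit $16\times 16$ matrix \ref{48invertible} for $(4,8)$.

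However, there is a genuine gap in the case $k=3$, $N\geq 9$, and your proposed way around it does not work. For $k=3$ the node constraints do zero out entries of $H\vert_{x^0}$, namely those $a^{tt'}_{ij}$ with both $t,t'\in I^{\ell}$, which is exactly condition \ref{c1} of the paper (the last $3\times 3$ block of each $A_{ij}$ is forced to be zero). After this forced specialization, $\det H\vert_{x^0}$ is a \emph{restriction} of the irreducible polynomial $\det H_{3,N}$ to a coordinate subspace, and irreducibility of $\det H_{3,N}$ gives you nothing about whether this restriction is identically zero: your own rank bounds show that it \emph{is} identically zero for $N=7,8$, even though $\det H_{3,8}$ is irreducible. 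The heuristic ``at least three neutral indices leave enough free parameters'' only removes the obstruction supplied by the rank bound; it does not establish nonvanishing. To close the gap you must actually exhibit, for each $N\geq 9$, a skew-block matrix satisfying condition \ref{c1} (and, to handle the shared entries you correctly identify, the compatibility conditions \ref{c2}--\ref{c4}) with nonzero determinant. This is precisely what the paper does: it lists such matrices for $N=9,10,11$ and then propagates them by the direct-sum specialization $H_{3,6}\oplus H_{3,a}\hookrightarrow H_{3,a+3}$. Also note that the full-rank specializations of subsection \ref{ranklemmas} that you cite as an ingredient do not apply directly here, since those produce corank-one Hessians for the cusp analysis, not invertible ones satisfying the node constraints; a separate list of base cases is required.
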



\begin{proof}
Recall that we used the affine chart $E$ around the point $[1:0\ldots :0]$. The matrix:
\begin{align}
w= \begin{bmatrix}
0& I_{N-k\times N-k} \\ 
I_{k\times k}&0 \\
 \end{bmatrix}
\end{align}

sends $E$ to $wE$ where $wE$ is the chart around the point $[0:\ldots:0:1]$. Similar to the proof of theorem \ref{thmcusp}, we consider 
\begin{align}
\tilde{Z^2}:=\left\{\left(A,x,y\right)\in\cA\times E\times wE\vert (A,x,y)\in Z^2\right\}
\end{align}
The projection onto the first factor is the generic node component i.e.
\begin{center}
$\nabla_{node}(\emptyset)=\overline{\pi_1(\tilde{Z^2})}$
\end{center}.

Therefore, for a generic $A\in \nabla(x^0)\cap\nabla(x') $, the tangent space $T_A\nabla_{node}(\emptyset)=\pi_1\left(T_{(A,x^0,x')}\tilde{Z^2}\right)$. Here we introduce the dual notation similar to \ref{not1}:
\begin{notation} To simplify the form which vanishes around $x'$ we have:
\begin{itemize}
\item $^wa:=a_{N-k+1\ldots N}$
\item $^wa^t_p:=a_{N-k+1\ldots \hat{t}\ldots N}$ where $1\leq t\leq N-k$ is the value at the position $p$, $N-k+1\leq p\leq N$
\end{itemize}
\end{notation}

The tangent space $T_{\left(A,x^0,x'\right)}\tilde{Z}^2$ is given by equations  
\begin{align}\label{eq3}
da=0, da^t_p+\sum\limits_{t^{'},p^{'}} a^{tt'}_{pp'}dx^{p'}_{t'}=0
\end{align}
together with 
\begin{align}\label{eq4}
d{}^wa=0, d{}^wa^t_p+\sum\limits_{t^{'},p^{'}} {}^wa^{tt'}_{pp'}dx^{p'}_{t'}=0
\end{align}

The tangent space at a generic $A\in \nabla\left(x^0\right)\cap\nabla\left(x'\right)$ is given by $da=d{}^wa=0$ if and only if the other equations in \ref{eq3} and \ref{eq4} do not produce additional relations between $da^t_p, d{}^wa^t_p$. This is equivalent to existence of invertible Hessian matrices, since the system of equations in \ref{eq3}, \ref{eq4} can be collected into:
\begin{gather}
H(F(A,x))\vert_{x^0}[dx^p_t]=-[da^t_p]\\
H(F(A,x))\vert_{x'}[d^wx^p_t]=-[d^wa^t_p]
\end{gather}

If both of the Hessian matrices $H(x^0):=H(F(A,x))\vert_{x^0}, H(x'):=H(F(A,x))\vert_{x'}$ are invertible, then it holds. \\

Assume that $k\geq 5$. There is no common entries of $H(x^0),H(x')$ and those entries are arbitrary, so their determinants are nonzero. Therefore the generic node component is of codimension one in $X^{\vee}$.

If $k=4$, there are $36$ nonzero common entries of $H(x^0)$ and $H(x')$. Specifically the common entries of $H(x^0)$ and $H(x')$ are of the form $a^{tt'}_{pp'}$ where $t,t'\in I^{\ell}$ and $p,p'\in I^f$.  After row and column operations, they satisfy
\begin{align}\label{k4hessiantuple}
\left(H(x^0)_{{\alpha\beta}}\right)_{\gamma\theta}=\left(H(x')_{{\overline{\gamma\theta}}}\right)_{\overline{\alpha\beta}}
\end{align}

where inner subscripts give position of skew symmetric block in the whole Hessian, outher subscripts are inner position in the block, overline means complementary indices i.e. $\overline{\alpha\beta}=\left\{1,2,3,4\right\}-\left\{\alpha,\beta\right\}$. For $N=8$, there exists such matrix \ref{48invertible}. By using the specialization methods \ref{hessian}, we can use \ref{48invertible} to obtain larger invertible Hessian matrices for $k=4$ satisfying \ref{k4hessiantuple}.

The most complicated case occurs when $k=3$, because there are more constraints on the entries of $H(x^0)$ and $H(x')$. We need to find $H(x^0)$ and $H(x')$
\begin{align}\label{tuple1}
H(x^0)=\left[ \begin {array}{ccc} 0&A_{{12}}&A_{{13}}\\\noalign{\medskip}-A_{{12}}&0&A_{{23}}\\\noalign{\medskip}-A_{{13}}&-A_{{23}}&0
\end {array} \right], \vspace{10pt} H(x')=\left[ \begin {array}{ccc} 0&B_{{12}}&B_{{13}}\\\noalign{\medskip}-B_{{12}}&0&B_{{23}}\\\noalign{\medskip}-B_{{13}}&-B_{{23}}&0
\end {array} \right] 
\end{align}

satisfying:

\begin{enumerate}[label=\roman*)]
\item \label{c1} For each $A_{ij}$ (respectively, $B_{ij}$) the last (respectively, the first) $3\times 3 $ block is zero matrix.
\item \label{c2} The first row of $B_{12}$ (respectively, $B_{13},B_{23}$) is the transpose of $(N-3)$th (respectively, $(N-4)$th, $(N-5)$th) column of $A_{23}$ 
\item \label{c3} The second row of $B_{12}$ (respectively,$B_{13},B_{23}$) is the transpose of $(N-3)$th (respectively, $(N-4)$th, $(N-5)$th) column of $A_{13}$ 
\item \label{c4} The third row of $B_{12}$ (respectively, $B_{13},B_{23}$) is the transpose of $(N-3)$th (respectively, $(N-4)$th, $(N-5)$th) column of $A_{23}$ 
\end{enumerate}
In section \ref{hessian} we list matrices satisfying the above conditions for $N=9,10,11$. Then by using the specialization methods \ref{ranklemmas}, we prove that for any $N\geq 9$ there are pair of matrices satisfying all \ref{c1},\ref{c2}\ref{c3}\ref{c4}.\\

We continue our analysis with the remaining cases i.e. $k=3, N=6,7,8$. For $N=6$, there is no nonzero entry satisfying conditions, hence there is no generic node component. For $N=7,8$, it is nonzero, however determinant of Hessian matrices vanish, so they are subvarieties of $X^{\vee}_{cusp}$. 

\end{proof}

\section{Special Node Components}\label{specnode}

In the analysis of the generic node component, we used the sets $I^{\ell}$, $I^f$ \ref{twosets} and we imposed the condition: $I^f\cap I^{\ell}=\emptyset$. Now we analyze the case that we allow nontrivial intersection. 

 Let $J$ be an ordered set such that 
\begin{align}
J\subset I^f\cup I^{\ell}, \quad \vert J\vert=k
\end{align} 
The complement of $J$ in $I^f\cup I^{\ell}$ is denoted by $\overline{J}$, i.e. $J\cap \overline{J}=\emptyset$ and $J\cup \overline{J}=I^f \cup I^{\ell}$.\\
We use the following modification of the construction in \cite{WZ} to the dual Grassmannian:
Let $x\left(J\right)\in Y$ be the point with coordinate matrix $K$ of size $k\times N$ such that $K_{i,J(i)}=1$ otherwise zero where $J(i)$ means $i$th element of the ordered set $J$. In particular if $J=I^f$, it gives the point $[1:0:\ldots :0]$ in Pl\"ucker embedding.
Recall that $\nabla\left(x\right)$ is the set of arrays $a\in\cA$ such that $x$ is the critical point of $A$ \ref{nablax}. $\nabla\left(x\right)$ is a vector subspace in $\cA$ and it has codimension $k\left(N-k\right)+1$. We use star of a multiindex $(i_1,\ldots,i_k)$ which is the set of all multiindices that differs from $(i_1,\ldots,i_k)$ in at most one position. 
Let $P\subset I^f$. Replacement $r(P)$ is the multiindex in which we replace the $\theta$th position of $I^f$ with the $\theta$th position of $J$. 
We have :
\begin{align}
\nabla\left(x\left(J\right)\right)=\left\{A\in\cA\vert a_{i_1\ldots i_k}=0,\quad  \,\forall (i_1\ldots i_k)\,\text{in the star of}\, J\right\}
\end{align}
By using this we get:
\begin{align}
\nabla_{node}\left(J\right)=\overline{\left(\nabla\left(x\left(I^f\right)\right)\cap\nabla\left(x\left(J\right)\right)\right)\cdot G}
\end{align}
where $J\neq I^{\ell}$, and bar denotes Zariski closure. 
Indeed the node component $X^{\vee}_{node}$ described in  \ref{definitionofnode} has the decomposition according to sets $J$, i.e. $X^{\vee}_{node}=\bigcup_{J}\nabla_{node}(J)$.
We studied the case $J=I^{\ell}$ in the previous section \ref{genericnodesection}. 
Observe that the star of multiindices $I^f$ and $I^{\ell}$ do not meet each other. Therefore, $\nabla\left(x\left(I^f\right)\right)\cap \nabla\left(x\left(I^{\ell}\right)\right)$ has codimension $2\left(1+k\left(N-k\right)\right)$ in $\cA$ as a vector subspace. Using the action of the group $G$, we see that $\nabla(x)\cap\nabla(y)$ has codimension  $2\left(1+k\left(N-k\right)\right)$ whenever $\nabla(x)\cap\nabla(y)\subset\nabla_{node}(\emptyset)$. By the same argument we conclude that $\nabla\left(x\left(I^f\right)\right)\cap \nabla\left(x\left(J\right)\right)$ has codimension  $2\left(1+k\left(N-k\right)\right)$, whenever $\vert I^f\cap J\vert \leq k-3$.\\
If  $\vert I^f\cap J\vert \leq k-2$, and $J-I^f=\left\{\alpha,\alpha'\right\}$, $I^f-J=\left\{t,t'\right\}$, star of multiindices has $4$ intersections. They are:
\begin{align}
a^{\alpha}_{t},\,a^{\alpha}_{t'}\,,\,a^{\alpha'}_{t},\,a^{\alpha'}_{t'}
\end{align}
Hence in this case codimension is  $2\left(k\left(N-k\right)-1\right)$ in $\cA$. \\
If  $\vert I^f\cap J\vert \leq k-1$, $\nabla_{node}(J)$ is not a component of $X^{\vee}_{node}$ by the next proposition.

\begin{proposition}\label{nodesubvarietycusp} For $G\left(k,\Cc^N\right)^{\vee}$, if $\vert J\cap I^f\vert=k-1$ then $\nabla_{node}\left(J\right)\subset X^{\vee}_{cusp}$.
\end{proposition}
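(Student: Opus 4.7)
The plan is to prove the stronger pointwise statement that
\[
\nabla(x(I^f))\cap\nabla(x(J))\subset\nabla^0_{cusp};
\]
translating under $G$ and taking closure then immediately gives $\nabla_{node}(J)\subset X^{\vee}_{cusp}$ since both varieties are, by construction, $G$-orbit closures of the corresponding subspaces of $\cA$.

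First I would write $J=(I^f-\{p_0\})\cup\{\alpha\}$ with $p_0\in I^f$ and $\alpha\in\{k+1,\dots,N\}$ and unpack both tangency conditions in the chart $E$ around $x^0=x(I^f)$. The condition $A\in\nabla(x(I^f))$ is simply $a=0$ together with $a^t_p=0$ for every $p\in I^f$ and $t\in\{k+1,\dots,N\}$. The condition $A\in\nabla(x(J))$ asks that $a_{J'}=0$ for every $J'$ in the star of $J$. Enumerating those neighbors: a $J'$ obtained by swapping $\alpha$ out of $J$ for $p_0$ or for some $\beta\neq\alpha$ in $\{k+1,\dots,N\}$ gives coefficients $a$ or $a^{\beta}_{p_0}$, and a $J'$ obtained by swapping a $q\in I^f-\{p_0\}$ out of $J$ for $p_0$ gives $a^{\alpha}_q$; all of these are already killed by $A\in\nabla(x(I^f))$. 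The only genuinely new vanishings come from the neighbors where $\alpha$ stays in $J'$ while some $q\in I^f-\{p_0\}$ is exchanged for some $\beta\in\{k+1,\dots,N\}-\{\alpha\}$:
\[
a^{\alpha\beta}_{p_0 q}=0,\qquad q\in I^f-\{p_0\},\ \beta\in\{k+1,\dots,N\}-\{\alpha\}.
\]

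The key observation is then that these are exactly the nonzero entries that can appear in the row of the Hessian $H(F(A,x))|_{x^0}$ indexed by the local coordinate $(p_0,\alpha)$. Indeed, the off-diagonal entries of $H$ are of the form $\pm a^{tt'}_{pp'}$ with $p\neq p'$ and $t\neq t'$, while the remaining entries of this row --- those with the position $p_0$ repeated, those with the index $\alpha$ repeated, and the diagonal entry --- vanish identically by the skew-symmetric block structure visible in Notation~\ref{not1} and illustrated in the $9\times 9$ example of Section~\ref{cuspsection}. Thus the row $(p_0,\alpha)$ of $H$ is identically zero, $\det H(F(A,x))|_{x^0}=0$, and $A\in\nabla^0_{cusp}$, as required.

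The only step requiring some care is the combinatorial enumeration of the star of $J$ and the sign bookkeeping that matches each $a_{J'}$ with its name $a^{t_1\ldots t_r}_{p_1\ldots p_r}$ in the chart $E$; this is routine once the conventions of Notation~\ref{not1} are fixed, and I do not expect any genuinely geometric obstacle. What makes the proof work is that $J$ and $I^f$ differ in a \emph{single} position, so the extra vanishings from $\nabla(x(J))$ line up perfectly with one Hessian row rather than being spread across the whole matrix; this is precisely where the hypothesis $\vert J\cap I^f\vert=k-1$ is essential.
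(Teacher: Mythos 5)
Your proof is correct and takes essentially the same approach as the paper's: both arguments single out the row of the Hessian at $x^0$ indexed by the unique discrepant position $p_0$ and the unique new index $\alpha$, observe that every potentially nonzero entry $a^{\alpha\beta}_{p_0 q}$ in that row lies in the star of $J$ and therefore vanishes on $\nabla(x(I^f))\cap\nabla(x(J))$, and conclude that the determinant is zero. Your version is slightly more explicit in enumerating the star of $J$ and in verifying that the repeated-index and repeated-position entries of that row vanish for structural reasons, but the idea is the same.
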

\begin{proof}
Hessian matrix associated to $I^f$ is given by blocks $\left[B^{\lambda_i\lambda_j}_{ij}\right]$, $1\leq i,j\leq k$.
Let $J$ be the set with $I^f-J=\alpha$ and $J-I^f=\beta$. So intersection of $J$ and $I^f$ is of size $k-1$. With respect to $J$, $a^{\lambda_{\beta}}_{\beta}$ for all possible $\lambda_{\beta}$ is zero, since they are linear terms. Moreover $a^{\beta j^{\prime}}_{\alpha j}$ are zero in the Hessian matrix, since they are linear terms of $J$. For all possible $j$, $a^{\beta j^{\prime}}_{\alpha j}$ forms zero column and row in the Hessian matrix, so its determinant is zero. This implies $\nabla_{node}\left(J\right)\subset X^{\vee}_{cusp}$ if $\vert J\cap I^{f}\vert=k-1$.
\end{proof}

\subsection{Construction of $x(J,T)$}
For every $J$ and every $T\neq 0$ we define the point $x\left(J,T\right)$ as follows: 
Let $K$ be $k\times N$ matrix with the first $k\times k$ block is the identity matrix. Entries of the last $k\times k$ block are either $T, T^{-1}$ or zero described as:
\begin{itemize}
\item Put $T$ into the position $K_{rs}$, $r,s$ are row and column indices with $r\in I^f\cap J$, $s\in I^{\ell}-J$ is the replacement. 
\item For positions $I^{\ell}\cap J$, put $T^{-1}$. 
\item Fill the remaining positions with zeros. 
\end{itemize}

We give here some examples of points parametrized by $J,T$. 

\begin{example}
For $G\left(4,\Cc^8\right)$, given $J=\left\{1,6,7,8\right\}$.
\begin{align}
K=\begin{bmatrix}
1 &0 &0 &0 &T &0 &0 &0  \\
0 &1 &0 &0 &0 &T^{-1} &0 &0  \\
0 &0 &1 &0 &0 & 0& T^{-1}&0  \\
0 &0 &0 &1 &0 & 0& 0&T^{-1}  \\ 
\end{bmatrix} 
\end{align}

Replacement is between $1$ and $5$.
For $G\left(4,\Cc^{10}\right)$, given $J=\left(2,3,8,9\right)$
\begin{align}
K=\begin{bmatrix}
1 &0 &0 &0 &0 &0 &0 &T^{-1} &0 &0 \\
0 &1 &0 &0 &0 &0 &T &0 &0 &0 \\
0 &0 &1 &0 &0 &0 &0 &0 &0 &T \\
0 &0 &0 &1 &0 &0 &0 & 0&T^{-1} &0 \\ 
\end{bmatrix} 
\end{align}
Replacement is $1\mapsto 8$, $2\mapsto 7$, $3\mapsto 10$, $4\mapsto 9$.
For $G\left(5,\Cc^{14}\right)$, given $J=\left(2,4,12,13,15\right)$, last square block:
\begin{align}
K=\begin{bmatrix}
 0&T^{-1} &0 &0 &0  \\
 T&0 & 0& 0  & 0 \\
 0&0 & 0&  0& T^{-1}\\
 0&0 & 0 & T &0\\ 
 0& 0& T^{-1}&0 &0 \\ 
\end{bmatrix} 
\end{align}
\end{example}

\begin{lemma}\label{nodesubvarietynode}
If $\vert I^f\cap J\vert \leq k-3$ then 
\begin{align*}
\lim_{T\mapsto 0} \nabla\left(x\left(I^f\right)\right)\cap\nabla\left(x\left(J,T\right)\right)=\nabla\left(x\left(I^f\right)\right)\cap\nabla\left(x\left(J\right)\right),
\end{align*}
the limit taken in the Grassmannian of subspaces of codimension $2\left(k\left(N-k\right)+1\right)$.
\end{lemma}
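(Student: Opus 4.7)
The strategy is to realize the assignment $x \mapsto \nabla(x)$ as a morphism $\psi : X \to \mathrm{Gr}_c(\cA)$ into the Grassmannian of codimension $c = k(N-k)+1$ subspaces of $\cA$. This is well defined because the defining equations $F(A, y) = 0$ and $d_y F(A) = 0$ of $\nabla(y)$ are linear in $A$ with coefficients depending polynomially on the Pl\"ucker coordinates of $y$, and because every fiber $\nabla(x)$ has codimension exactly $c$ by $GL_N$-homogeneity of $X$. The lemma then reduces to two steps: (a) $x(J,T) \to x(J)$ in $X$ as $T \to 0$, whence $\nabla(x(J,T)) \to \nabla(x(J))$ in $\mathrm{Gr}_c(\cA)$ by continuity of $\psi$; and (b) intersection with the fixed subspace $W := \nabla(x(I^f))$ preserves codimension $2c$ in a neighborhood of $T = 0$, so the limit of intersections equals the intersection of limits.

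For step (a), which I expect to be the main obstacle, I would compute $\eta_I(x(J,T))$ as a signed sum over matchings between rows and columns of $I$ in the coordinate matrix. Each row $i$ has two nonzero entries: a $1$ at column $i \in I^f$, and a partner entry in $I^\ell$ which is $T$ (partner in $I^\ell - J$) if $i \in I^f \cap J$, or $T^{-1}$ (partner in $I^\ell \cap J$) if $i \in I^f - J$. A matching contributes the monomial $T^e$ with $e = \#\{T\text{-partners selected}\} - \#\{T^{-1}\text{-partners selected}\}$; the minimum value $e = -|I^\ell \cap J|$ is attained only when every row in $I^f - J$ is matched to its $T^{-1}$-partner and no row in $I^f \cap J$ is matched to its $T$-partner, which forces $I^\ell \cap J \subseteq I$ and $I^f \cap J \subseteq I$, hence $J \subseteq I$ and so $I = J$ since $|I| = |J| = k$. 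Thus $\eta_J(x(J,T)) = \pm T^{-|I^\ell \cap J|}$ is the unique Pl\"ucker coordinate of minimum exponent, and rescaling by $T^{|I^\ell \cap J|}$ sends $\eta_J \to 1$ and $\eta_I \to 0$ for $I \neq J$ as $T \to 0$, so $[x(J,T)] \to [x(J)]$ in $X$.

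For step (b), set $V_T := \nabla(x(J,T))$. By step (a) this is a continuous family of codimension $c$ subspaces with limit $V_0 = \nabla(x(J))$, and $T \mapsto \mathrm{codim}_\cA(V_T \cap W)$ is lower semicontinuous and bounded above by $2c$. The hypothesis $|I^f \cap J| \leq k-3$ implies the stars of $I^f$ and $J$ are disjoint: any common multi-index $I'$ would satisfy $|I' \cap I^f|, |I' \cap J| \geq k-1$, which by inclusion-exclusion forces $|I^f \cap J| \geq k - 2$, a contradiction. Hence $W$ and $V_0$ are cut out by disjoint coordinate equations and $\mathrm{codim}(V_0 \cap W) = 2c$. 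Lower semicontinuity combined with the upper bound then forces $\mathrm{codim}(V_T \cap W) \equiv 2c$ in a neighborhood of $T = 0$, so the family of intersections defines a continuous morphism into $\mathrm{Gr}_{2c}(\cA)$ whose value at $T = 0$ is $V_0 \cap W = \nabla(x(I^f)) \cap \nabla(x(J))$, as desired.
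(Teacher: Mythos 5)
Your proof is correct and takes a genuinely different, more geometric route than the paper's. The paper argues directly at the level of defining equations: it exhibits $2(k(N-k)+1)$ linear forms $\phi_{i,T}$ cutting out $\nabla(x(I^f))\cap\nabla(x(J,T))$ --- the first $k(N-k)+1$ independent of $T$ (the equations of $\nabla(x(I^f))$), the rest built from $F(A,x(J,T))$ and its partial derivatives, each normalized by a suitable power of $T$ to become polynomial --- and reads off that the evaluated forms $\phi_{i,0}$ are precisely the coordinate equations cutting out $\nabla(x(I^f))\cap\nabla(x(J))$, their linear independence resting on the disjointness of the stars of $I^f$ and $J$. You instead package $x\mapsto\nabla(x)$ as a morphism $\psi:X\to\mathrm{Gr}_c(\cA)$, reduce all the $T$-dependence to a single Pl\"ucker-coordinate computation showing $x(J,T)\to x(J)$ in $X$ (your matching analysis identifying $\eta_J=\pm T^{-|I^\ell\cap J|}$ as the unique lowest-order coordinate is exactly right), and then handle the intersection with the fixed $W=\nabla(x(I^f))$ by lower semicontinuity of codimension together with the fact that codimension $2c$ is attained at $T=0$ --- where again the hypothesis $|I^f\cap J|\leq k-3$ enters as disjointness of stars. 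Your route avoids tracking the exact $T$-dependence of the defining equations of $\nabla(x(J,T))$ and the bookkeeping of which partial derivatives are trivially zero, replacing all of that with a continuity-and-dimension argument; the paper's route is more explicit and self-contained. Both hinge on the star-disjointness at the same logical point.
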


\begin{proof}
We will exhibit a system of linear forms $\phi_{i,T}$, $\left(i=1,\ldots,2k\left(N-k\right)+2\right)$ on space $\cA$ defining the subspace $\nabla\left(x\left(I^f\right)\right)\cap\nabla\left(x\left(J,T\right)\right)$ and having the following property:\\
Each $\phi_{i,T}$ is a polynomial function of $T$ and the linear forms $\phi_{i,0}$ are linearly independent.\\
The above property implies existence of a limit operation, and limit is given by evaluation of forms at $T=0$.
The first $k\left(N-k\right)+1$ forms $\phi_{i,T}$ will be independent of $T$, these are the degree zero and degree one forms defining $\nabla\left(x\left(I^f\right)\right)$. \\
For the remaining forms we take forms associated to $x\left(J,T\right)$ and its partial derivatives. Explicitly:
\begin{align}
F\left(A,x\left(J,T\right)\right)=\left(\sum_{P\subset I^f} T^{\vert J\cap P\vert-\vert\overline{J}\cap P\vert}a_{r\left(P\right)}\right)
\end{align}
where $\overline{J}$ is the complement of $J$, $r\left(P\right)$ is replacement.
Then to make it polynomial we multiply it with suitable power of $T$, explicitly $T^{\vert I^f-J\vert}$.
\begin{example}
$G\left(4,\Cc^{10}\right)$, $J=\left(2,3,8,9\right)$, $I^f=\left(1,2,3,4\right)$, $I^{\ell}=\left(7,8,9,10\right)$, $\overline{J}=\left(1,4,7,10\right)$. Replacement is given by $1\mapsto 8$, $2\mapsto 7$, $3\mapsto 10$, $4\mapsto 9$. We give a few of them:
\begin{center}
 \begin{tabular}{| l | l | l  |}
    \hline
     $r\left(P\right)$  & $\overline{J}\cap P$ & $J\cap P$ \\ \hline
     $r\left(\left\{1\right\}\right)=8,2,3,4$ &1&0\\ \hline
     $r\left(\left\{2\right\}\right)=1,7,3,4$ &0&1\\ \hline
     $r\left(\left\{3\right\}\right)=1, 2, 10, 4$ &0&1\\ \hline
     $r\left(\left\{4\right\}\right)=1 ,2 ,3 ,9$ &1&0\\ \hline
     $r\left(\left\{1,2\right\}\right)=8, 7 ,3, 4$ &1&1\\ \hline
     $r\left(\left\{1,3\right\}\right)=8, 2, 10, 4$ &1&1\\ \hline
     $r\left(\left\{1 , 2,3\right\}\right)=8, 7, 10, 4$ &1&2\\ \hline    
    \end{tabular}
 \end{center}
\end{example}
The limit of this form is $a_{r\left(J\right)}$. 

All other forms 
\begin{align}
\frac{\partial F\left(A,x\left(J,T\right)\right)}{\partial x^j_i}
\end{align}

are treated in a similar way for possible indices $i,j$. They are all possible $1\leq j\leq k$ and $1\leq i\leq N$ excluding:
\begin{enumerate}[label=(\alph{*})]
\item $i=j\in I^f\cap J$, $i\in I^f$
\item $i\in T^l\cap J$, $j\in I^f-J$ and replacement of $j$ is $i$, $i\in I^{\ell}$.
\end{enumerate}

In the limit we obtain $a_{i_1,\ldots,i_k}$ for all $\left(i_1,\ldots,i_k\right)$ in the star of $J$. Those forms define $\nabla\left(x\left(J\right)\right)$, we are done.
\end{proof}

\begin{lemma}
If $\vert I^f\cap J\vert =k-2$ , $J-I^f=\left\{\alpha,\alpha'\right\}$, $I^f-J=\left\{t,t'\right\}$
\begin{align*}
\lim_{T\mapsto 0} \nabla\left(x\left(I^f\right)\right)\cap\nabla\left(x\left(J,T\right)\right)=\nabla\left(x\left(I^f\right)\right)\cap\nabla\left(x\left(J\right)\right),
\end{align*}
exists and is equal to the subspace in $\nabla\left(x\left(I^f\right)\right)\cap \nabla\left(x\left(J\right)\right)$ given by four additional equations:
\begin{align*}
\sum\limits_{j\in I^f\cap J} a^{r(j)\alpha}_{jt}=\sum\limits_{j\in I^f\cap J} a^{r(j)\alpha'}_{jt}=0 \\
\sum\limits_{j\in I^f\cap J} a^{r(j)\alpha}_{jt'}=\sum\limits_{j\in I^f\cap J} a^{r(j)\alpha'}_{jt'}=0 
\end{align*}
where $r(j)$ is the replacement of $j$.
\end{lemma}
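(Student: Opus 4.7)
The plan is to mimic the proof of the previous lemma by exhibiting an explicit polynomial family $\{\phi_{i,T}\}$ of $2(k(N-k)+1)$ linear forms on $\cA$ whose common zero locus is $\nabla(x(I^{f}))\cap\nabla(x(J,T))$ for $T\neq 0$, with each $\phi_{i,T}$ polynomial in $T$ and the evaluations $\phi_{i,0}$ linearly independent, so that the limit subspace in the Grassmannian is defined by $\phi_{i,0}=0$. The first batch of $k(N-k)+1$ forms consists of the standard equations cutting out $\nabla(x(I^{f}))$ (the coefficient $a_{I^{f}}$ and the linear terms $a^{t}_{p}$ with $t\in I^{\ell}$, $p\in I^{f}$) and does not depend on $T$. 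The second batch comes from $F(A,x(J,T))=0$ and its $k(N-k)$ admissible partial derivatives, each multiplied by $T^{\vert I^{f}-J\vert}=T^{2}$ to clear the denominators arising from the $T^{-1}$ entries of the $x(J,T)$-matrix.

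For any second-batch form whose $T=0$ evaluation is a coefficient $a_{I}$ with multi-index $I$ in the star of $J$ but disjoint from the star of $I^{f}$, evaluation at $T=0$ is all that is needed, exactly as in the previous lemma, and collectively these recover the remaining equations for $\nabla(x(J))$. The new feature is the presence of precisely four second-batch forms whose naive $T=0$ leading terms are the four shared coefficients $a^{\alpha}_{t}$, $a^{\alpha}_{t'}$, $a^{\alpha'}_{t}$, $a^{\alpha'}_{t'}$, which already vanish identically on $\nabla(x(I^{f}))$. For each of these I would perform one elementary row-reduction: subtract from $\phi_{i,T}$ the corresponding first-batch form (which has the same value at $T=0$, hence does not change the span of the defining system), then divide the resulting $T$-divisible polynomial form by $T$. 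The value of the new form at $T=0$ is a genuinely non-trivial linear form in the quadratic coefficients, and the modification is valid because the span of the family is unchanged for every $T\neq 0$.

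Identifying these four new forms is the combinatorial heart of the proof. Expanding $T^{2}\cdot\partial F(A,x(J,T))/\partial x^{j}_{i}$ as $\sum_{I}a_{I}\cdot T^{2}\,\partial\eta_{I}/\partial x^{j}_{i}\big|_{x(J,T)}$ and isolating the $O(T)$ contribution (after subtracting off the $O(T^{0})$ shared-coefficient piece), one finds that the surviving Plücker multi-indices are precisely those obtained from the shared index by one further standard-position/replacement exchange $j\leftrightarrow r(j)$ with $j\in I^{f}\cap J$. Summing these contributions yields $\sum_{j\in I^{f}\cap J} a^{r(j)\alpha}_{jt}$ up to a global sign absorbed in the skew-symmetry conventions of Notation~\ref{not1}, and the remaining three equations follow by the $\alpha\leftrightarrow\alpha'$, $t\leftrightarrow t'$ symmetry built into the construction of $x(J,T)$.

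The main obstacle is the sign and index bookkeeping of this expansion: one must verify that the order-$T^{0}$ shared piece is the only obstruction and that no surprising cancellations occur at order $T$, and that exactly the indices $j\in I^{f}\cap J$ (and no others) contribute. Once this is settled, linear independence of the four new forms from each other and from the rest of the defining system is immediate, because they involve only quadratic coefficients $a^{u u'}_{p p'}$ that appear nowhere else in the family, and the four index labels $(j\in I^{f}\cap J;\,t\text{ or }t';\,\alpha\text{ or }\alpha')$ distinguish them pairwise. Consequently the limit subspace has the correct codimension $2(k(N-k)+1)$ and equals $\nabla(x(I^{f}))\cap\nabla(x(J))$ cut by the four stated equations.
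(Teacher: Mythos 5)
Your proposal matches the paper's argument essentially verbatim: normalize the second batch by $T^{\vert I^f - J\vert}=T^{2}$, isolate the four derivative forms whose order-zero evaluations are the shared linear coefficients $a^{\alpha}_{t},a^{\alpha'}_{t},a^{\alpha}_{t'},a^{\alpha'}_{t'}$, row-reduce against the first batch, divide by $T$, and read off $\sum_{j\in I^f\cap J}a^{r(j)\alpha'}_{jt'}$ and its three companions at $T=0$. One small point the paper handles explicitly and you gloss over: the order-$T$ coefficient of $\varphi'_T$ also contains the constant term $a$, so the paper subtracts $Ta$ as well as $a^{\alpha'}_{t'}$ before dividing by $T$; your version would leave an extra $a$ in each of the four new forms, which is harmless for identifying the limit subspace (since $a=0$ is already a first-batch equation) but should be noted when you assert the new forms ``involve only quadratic coefficients.''
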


\begin{proof}
We prove it in a similar way to the previous lemma but with a slight modification. When we normalize the form by multiplying a suitable power of $T$, we get
\begin{align*}
A\mapsto \frac{\partial}{\partial x^{t}_{\alpha}} F\left(A,x\left(J,T\right)\right)
\end{align*}
Let $\varphi^{'}_{T}$ be the form whose first two leading terms are given by:
\begin{align*}
a^{\alpha'}_{t'}+T\left(a+\sum\limits_{j\in I^f\cap J} a^{r(j)\alpha'}_{jt'}\right)
\end{align*}
Since the forms $a$ and $a^{\alpha'}_{t'}$ are degree zero and degree one terms with respect to $I^f$ which are already zero, we replace the form by
\begin{align*}
\varphi_{T}=T^{-1}\left(\varphi^{'}_{T}-Ta-a^{\alpha'}_{t'}\right)
\end{align*}
Now we have 
\begin{align}
\varphi_0=\sum\limits_{j\in I^f\cap J} a^{r(j)\alpha'}_{jt'}
\end{align}
We get the other equations by just differentiating the form at $x^{t'}_{\alpha}$, $x^{t'}_{\alpha'}$ and $x^{t}_{\alpha'}$ .\\

\begin{proposition}
For $G\left(k,\Cc^N\right)^{\vee}$, if $\vert J\cap I^f\vert\leq k-3$ then $\nabla_{node}\left(J\right)\subset\nabla_{node}\left(\emptyset\right)$.
\end{proposition}

\begin{theorem}
\begin{enumerate}[label=(\alph{*})]
\item Suppose that $\vert I^f\cap J\vert =k-1$. Then $\nabla_{node}(J)\subset X^{\vee}_{cusp}$
\item Suppose that $\vert I^f\cap J\vert \leq k-2$ for all $k,N$ except $(k,N)=(3,6),(3,7),(3,8),(3,9)$. Then $\nabla_{node}(J)\subset\nabla_{node}(\emptyset)$
\end{enumerate}
\end{theorem}

We already proved part a in \ref{nodesubvarietycusp}. Part b, if the cardinality intersection $I^f\cap J$ is smaller than $k-2$, it follows from lemma \ref{nodesubvarietynode}. In the equality case we have the following arguments:

To show that $\nabla_{node}(J)$ lies in the $\nabla_{node}(\emptyset)$, it suffices to show the existence of a dense Zariski open set $U\in \nabla\left(x\left(I^{\ell}\right)\right)\cap\nabla\left(x\left(J\right)\right) $ satisfying the conditions below:
\begin{itemize}
\item For every array $A\in U$, there exists a $g\in G$ such that 
\begin{align}
A\cdot g\in \nabla\left(x\left(I^{\ell}\right)\right)\cap\nabla\left(x\left(J\right)\right)
\end{align}
\item $A\cdot g$ satisfies the additional equations in the lemma.
\end{itemize}

We need to find $g\in G$ such that $g$ fixes every coordinate except the basis indexed by $I^{\ell}-J$. And it sends elements of $I^{\ell}-J$ to $\sum c^{j}_{i}e_{i}$. With this $A.g$ satisfying the above conditions, and the number of unknowns (i.e. $c^j_i$) is more than $4$, so there exist a solution with nonzero coefficients. Hence it is Zariski open and dense.
\end{proof}

\section{Decomposition of Singular Locus}\label{decompositionofsingularlocus}

In this section we prove that $X^{\vee}_{sing}=X^{\vee}_{node}\cup X^{\vee}_{cusp}$. To show this, we will modify the arguments of \cite{WZ} which follow the ideas of \cite{katz} and \cite{dimca}. 
Let $X^{\vee}_{sm}=X^{\vee}-X^{\vee}_{sing}$ be the set of smooth points of $X^{\vee}$ and $\cO$ be the complement of $X^{\vee}_{node}\cup X^{\vee}_{cusp}$ in $X^{\vee}$. The following result is due to N. Katz \cite{katz}:

\begin{proposition}[\cite{WZ}, Proposition 6.1] Suppose the projection $\pi_1:Z\mapsto X^{\vee}$ \ref{projection} is generically unramified. Then this projection is birational. Furthermore $\cO$ consists of smooth points of $X^{\vee}$ and is the biggest open set in $X^{\vee}$ for which the projection $\pi_1:\pi^{-1}_1(\cO)\mapsto \cO$ is an isomorphism.
\end{proposition}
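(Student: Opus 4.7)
The plan is to prove all three conclusions by exploiting the smoothness of $Z$, the biduality theorem, and an explicit description of the fibers of $\pi_1$ via the Hessian.

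First I would observe that $Z$ is smooth: the second projection $Z\to X$ exhibits $Z$ as a vector bundle over the smooth variety $X$, because for each fixed $x$ the defining equations $F(A,x)=0$ and $\partial F(A,x)/\partial x^{i}_{j}=0$ are linear in $A\in\cA$. Differentiating these equations at a point $(F,x)\in Z$ along a tangent vector of the form $(0,v)$ with $v\in T_{x}X$ yields $v\cdot\nabla_{x}F=0$ (automatic since $x$ is a critical point) and $H(F)|_{x}\cdot v=0$. Hence the Zariski tangent space to the scheme-theoretic fiber $\pi_{1}^{-1}(F)$ at $(F,x)$ equals $\ker H(F)|_{x}$. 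Consequently $\pi_{1}^{-1}(F)$ is a single reduced point if and only if $F\notin X^{\vee}_{node}$ (one point set-theoretically) and $F\notin X^{\vee}_{cusp}$ (that point is reduced), i.e.\ if and only if $F\in\cO$.

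For birationality I would combine the generic unramifiedness hypothesis with the biduality theorem: biduality gives that the generic element of $X^{\vee}$ is tangent to $X$ at a unique point, so $\pi_{1}$ is generically one-to-one set-theoretically, and the unramified hypothesis upgrades this to reducedness of the generic fiber, forcing $\deg\pi_{1}=1$. For smoothness of $\cO$ and the isomorphism claim, $\pi_{1}$ is proper (since $X$ is projective), so its restriction $\pi_{1}^{-1}(\cO)\to\cO$ is a proper, quasi-finite, unramified, radicial morphism; by the standard \'etale-morphism result, any such map is a closed immersion, and being surjective it is an isomorphism onto $\cO$. Since $\pi_{1}^{-1}(\cO)$ is open in the smooth variety $Z$, this identifies $\cO$ with a smooth open subvariety of $X^{\vee}$.

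For maximality, a point $F\in X^{\vee}_{cusp}\setminus X^{\vee}_{node}$ has non-reduced scheme-theoretic fiber, so $\pi_{1}$ fails to be unramified there and cannot be an isomorphism in any neighbourhood of $F$; a point of $X^{\vee}_{node}$ has several distinct critical points in its fiber, so $\pi_{1}$ fails to be injective. In either case the restriction of $\pi_{1}$ cannot be an isomorphism on any open subset of $X^{\vee}$ containing $F$, so $\cO$ is the largest open set with the stated property.

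The main obstacle I anticipate is the careful invocation of the EGA result that ``proper, quasi-finite, unramified, radicial'' implies a closed immersion (hence an isomorphism onto a reduced image); once this is in place, everything else reduces to the Hessian computation and biduality. A secondary concern is verifying density of $\cO$ in $X^{\vee}$, which requires both $X^{\vee}_{cusp}\neq X^{\vee}$ (given by the generic unramifiedness hypothesis, since unramifiedness of $\pi_{1}$ at $(F,x)$ is equivalent to $\det H(F)|_{x}\neq 0$) and $X^{\vee}_{node}\neq X^{\vee}$ (guaranteed by biduality, or alternatively by the codimension computation in Theorem~\ref{resultnode}).
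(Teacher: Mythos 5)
Your reconstruction of this cited result (the paper itself merely quotes it from \cite{WZ}, attributing it to Katz, and does not reprove it) is sound in outline and follows the standard route: $Z$ is smooth and irreducible as a vector bundle over $X$; the Zariski tangent space of the scheme-theoretic fiber $\pi_1^{-1}(F)$ at $(F,x)$ is $\ker H(F)|_x$; biduality together with generic unramifiedness gives birationality; and the ``proper $+$ unramified $+$ radicial $\Rightarrow$ closed immersion'' argument identifies $\pi_1^{-1}(\cO)$ with $\cO$, so that $\cO$ consists of smooth points of $X^{\vee}$.

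The one real gap is in the biconditional ``$\pi_1^{-1}(F)$ is a single reduced point if and only if $F\in\cO$'' and in the maximality paragraph that relies on it. The implication $F\in\cO\Rightarrow$ single reduced fiber is what you actually use to get the isomorphism over $\cO$, and it is fine: $X^{\vee}_{cusp}$ is the proper image under $\pi_1$ of a closed locus in $Z$, hence itself closed, so $F\notin X^{\vee}_{cusp}$ forces a nondegenerate Hessian at every critical point, and $F\notin X^{\vee}_{node}$ forces a unique critical point. But the converse, and your assertion that ``a point of $X^{\vee}_{node}$ has several distinct critical points in its fiber,'' are not literally true: by Definition \ref{definitionofnode}, $X^{\vee}_{node}$ is a Zariski \emph{closure} (the condition $x\neq y$ in $Z^2$ is open, so the set being closed off is not closed), and a boundary point of $X^{\vee}_{node}$ may well have a unique critical point. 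Maximality should therefore be argued directly: if $U\subset X^{\vee}$ is open and $\pi_1$ restricts to an isomorphism over $U$, then $U$ avoids the locus of forms with at least two critical points, and since $U$ is open it also avoids the closure of that locus, namely $X^{\vee}_{node}$; likewise $U$ avoids the closed set $X^{\vee}_{cusp}$, so $U\subset\cO$. With that patch the proof is complete.
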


$\pi_1:Z\mapsto X^{\vee}$ is generically ramified if and only if $X^{\vee}$ is hypersurface \cite{katz}. By this remark and proposition above, $\cO$ is a subvariety of $X^{\vee}_{sm}$, which is equivalent to:
\begin{align}\label{firstinclusion}
X^{\vee}_{sing}\subseteq X^{\vee}_{node}\cup X^{\vee}_{cusp}
\end{align}

Now we need to show the reverse inclusion. We will deduce it from the following two propositions:

\begin{proposition}[\cite{WZ} Lemma 6.2] The variety of arrays $A$ having infinitely many critical points in $X^{\vee}$ is of codimension $\geq 2$ in $X^{\vee}$.\label{WZlemma} 
\end{proposition}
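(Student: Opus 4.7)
The plan is to bound the dimension of the locus $S \subseteq X^\vee$ of arrays with an infinite critical set, by exploiting the incidence variety $Z^2 \subseteq \cA \times X \times X$ introduced in section \ref{genericnodesection}. Recall $Z^2$ parametrizes triples $(A,x,y)$ with $x \neq y$ and both $x$ and $y$ critical points of $F(A,-)$.

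First I would establish the bound $\dim Z^2 \leq \dim \cA - 2$. I stratify $X \times X$ by the cardinality of the intersection of the two Pl\"ucker multiindices. On the open stratum of pairs $(x,y)$ in general position, after transporting by $G$ we may take $x = x(I^f)$ and $y = x(I^\ell)$; by the computation at the beginning of section \ref{specnode}, the fiber $\nabla(x) \cap \nabla(y)$ is a linear subspace of $\cA$ of codimension $2(1 + k(N-k))$, so the generic stratum contributes $\dim(X\times X) + (\dim\cA - 2 - 2k(N-k)) = \dim \cA - 2$. On the remaining strata (with $\vert I^f \cap J \vert \geq k-2$), the fiber $\nabla(x) \cap \nabla(y)$ has smaller codimension, but the base of such pairs has correspondingly smaller dimension; a stratum-by-stratum check using the codimension counts already recorded in section \ref{specnode} keeps the total at $\leq \dim \cA - 2$. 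Hence $\dim Z^2 = \dim \cA - 2 = \dim X^\vee - 1$.

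Next I would consider the projection $\pi_1 : Z^2 \to \cA$, $(A,x,y) \mapsto A$. For any $A \in S$ the critical set $C_A \subseteq X$ is an infinite closed subvariety of the Grassmannian, hence $\dim C_A \geq 1$; therefore the fiber
\begin{align}
\pi_1^{-1}(A) \cap Z^2 = (C_A \times C_A) \setminus \Delta
\end{align}
has dimension $2\dim C_A \geq 2$. The fiber-dimension inequality then gives $\dim(\pi_1^{-1}(S) \cap Z^2) \geq \dim S + 2$, and since this locus is contained in $Z^2$,
\begin{align}
\dim S + 2 \leq \dim Z^2 = \dim X^\vee - 1,
\end{align}
so $\dim S \leq \dim X^\vee - 3$, which in particular yields the desired codimension bound $\geq 2$ in $X^\vee$.

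The main obstacle is the first step, justifying $\dim Z^2 \leq \dim \cA - 2$. The delicate point is that on special strata of $X \times X$ the fiber $\nabla(x) \cap \nabla(y)$ swells, and one must check that this swelling is always compensated by the drop in dimension of the corresponding $G$-orbit in $X \times X$; this is exactly the content of the orbit-dimension stratification underlying section \ref{specnode}, so no new work is required.
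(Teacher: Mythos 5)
The paper does not prove this statement itself; it cites \cite{WZ}, Lemma 6.2, which holds for any smooth projective $X$ with hypersurface dual and runs through the single incidence variety $Z$, not $Z^2$: since $\pi_1:Z\to X^{\vee}$ is generically finite (Katz's birationality, already invoked in this section), the closed locus $Z'\subset Z$ where $\pi_1$ has positive-dimensional fibre is a proper subvariety, hence of codimension $\geq 1$ in $Z$; its image is $S$, and the fibres of $\pi_1|_{Z'}$ over $S$ are at least $1$-dimensional, so $\dim S\leq\dim Z'-1\leq\dim Z-2=\dim X^{\vee}-2$. That argument needs no Grassmannian-specific dimension counting, and it only yields the bound $\geq 2$.

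Your route through $Z^2$ is genuinely different, and if the inequality $\dim Z^2\leq\dim\cA-2$ holds it even gives the stronger codimension bound $\geq 3$; the fibre-dimension step is fine. The gap is the assertion that ``no new work is required'' for that inequality. Section \ref{specnode} records the codimension of $\nabla(x)\cap\nabla(y)$ only for $\vert I^f\cap J\vert\leq k-2$; for $\vert I^f\cap J\vert=k-1$ it shows instead that $\nabla_{node}(J)\subset X^{\vee}_{cusp}$, which controls the \emph{image} under $\pi_1$ but not the dimension of that stratum of $Z^2$, and it never compares a fibre codimension against the dimension of the corresponding $G$-orbit in $X\times X$, which is precisely what your argument needs. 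Carrying out the missing count: the orbit with $\dim(x\cap y)=d$ has dimension $k(N-k)+d(k-d)+(k-d)(N-k)$, and the two stars share $4$ multiindices when $d=k-2$ and $N$ multiindices when $d=k-1$, so those strata of $Z^2$ contribute $\leq\dim\cA-2$ precisely when $k(N-k)\geq 2N$ and $k(N-k)\geq 2N-1$, respectively. These inequalities hold on $k\geq 4,\ N\geq 2k$ and on $k=3,\ N\geq 9$ --- exactly where the paper invokes the proposition --- but fail for small cases, e.g.\ $(k,N)=(3,7)$, where $\dim Z^2=\dim\cA-1$. So the inequality you rely on is true in the relevant range, but it is tight at the boundary and must actually be verified; it does not follow for free from the stratification work of section \ref{specnode}.
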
This lemma is valid for any smooth projective variety such that its projectively dual is hypersurface.
The next result is due to A. Dimca \cite{dimca}:
\begin{proposition}[\cite{WZ}, Proposition 6.3.]\label{dimcalemma}
Suppose an array $A\in X^{\vee}$ has finitely many critical points in $X$. Let $H_A$ be the hyperplane in $\Pp(\bigwedge^k\Cc^N)$ defined by $A$. Then the multiplicity of $X^{\vee}$ at $A$ is equal to
\begin{align}
mult_A(X^{\vee})=\sum\limits_{x\in X(A)} \mu(X\cap H_A,x)
\end{align}
where the sum is over all critical points of $A$ in $X$ and $\mu(X\cap H_A,x)$ is the Milnor number of $X\cap H_A$ at $x$. In particular $\mu(X\cap H_A,x)=1$ if and only if the Hessian of $A$ at $x$ is nondegenerate.
\end{proposition}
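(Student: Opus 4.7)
The plan is to realize $mult_A(X^\vee)$ as the length at $A$ of the intersection of $X^\vee$ with a generic affine line $\ell \subset \cA$ through $A$, and then to show that this length decomposes into a sum of local contributions at the finitely many critical points $x_1,\ldots,x_s \in X(A)$, with each contribution equal to the Milnor number $\mu(X \cap H_A, x_i)$. This reduces a global multiplicity computation on $X^\vee$ to a classical local singularity-theoretic count at each $x_i$.

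First I would pick a generic one-parameter family $A_t = A + tB$ with $B \in \cA$ generic, so that $\ell = \{A_t\}$ meets $X^\vee$ transversely away from $A$. Since $X^\vee$ is a hypersurface, the scheme-theoretic intersection $\ell \cap X^\vee$ is locally at $t=0$ a fat point of length exactly $mult_A(X^\vee)$. Pulling this back through the projection $\pi_1 : Z \to X^\vee$ of the incidence variety (which is birational over the open set of $A$ with finitely many critical points, by the earlier Katz-type result), the length is carried entirely by those $(A_t,x) \in Z$ with $x$ close to some $x_i$; properness of $\pi_1$ combined with the finiteness of $X(A)$ ensures that no multiplicity escapes to other fibers of $X$.

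Second, I would localize at each $x_i$ and identify the contribution. Work in a local analytic chart of $X$ centered at $x_i$ and view $F(A_t,\cdot) = F(A,\cdot) + t F(B,\cdot)$ as a one-parameter deformation of $F(A,\cdot)$, whose zero set cut out in this chart is precisely $X \cap H_A$ near $x_i$, with an isolated singularity at $x_i$. For $B$ generic, this deformation is a Morsification: the singularity splits into $\mu(X \cap H_A, x_i)$ distinct Morse critical points of $F(A_t,\cdot)$ for small $t \neq 0$, by the standard Milnor-number theorem (cf.\ \cite{dimca}). Each Morse critical point lifts to a smooth point of $Z$ mapping isomorphically to a simple transverse intersection of $\ell$ with $X^\vee$, so the local contribution to $mult_A(X^\vee)$ at $x_i$ is exactly $\mu(X \cap H_A, x_i)$. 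Summing over $i$ yields the formula, and the final assertion is the Morse lemma: an isolated hypersurface singularity has Milnor number one iff its Hessian is nondegenerate, which for $F(A,\cdot)$ at $x$ is exactly the Hessian $H(F)$ of the form $A$ evaluated at $x$.

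The main obstacle is the local-to-global reduction itself, namely showing that the entire intersection length $mult_A(X^\vee)$ of $\ell$ with $X^\vee$ at $A$ is accounted for by the contributions coming from small neighborhoods of the finitely many critical points $x_i$, with no extra multiplicity hidden in $Z$. This is where the hypothesis of finitely many critical points is crucial: it guarantees that $\pi_1^{-1}(A)$ is a finite set of points in $Z$, and by semicontinuity of fiber length combined with properness of $\pi_1$, the length of $\ell \cap X^\vee$ at $A$ equals the sum over $\pi_1^{-1}(A)$ of the local contributions in $Z$. Once this is established, the identification of each local contribution with a Milnor number is the standard content of deformation theory of isolated hypersurface singularities.
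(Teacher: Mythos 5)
First, note that the paper does not actually prove this proposition: it is quoted from \cite{WZ}, Proposition 6.3, which in turn rests on Dimca's theorem \cite{dimca}, so your proposal has to be judged against the standard argument rather than against anything in the text. Your global skeleton is the right one and matches that standard argument: compute $\mathrm{mult}_A(X^{\vee})$ as the local intersection length of a generic line $\ell=\{A+tB\}$ with the hypersurface $X^{\vee}$, and use the birational projection $\pi_1:Z\to X^{\vee}$ together with properness of $Z\to\cA$ (as $X$ is projective) and the finiteness of $X(A)$ to confine all contributions to small neighborhoods of the points $x_i$.

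The genuine gap is in the local step. You identify the contribution at $x_i$ with the number of Morse critical points into which the singularity splits under $F(A,\cdot)+tF(B,\cdot)$, and assert that each Morse point ``maps isomorphically to a simple transverse intersection of $\ell$ with $X^{\vee}$.'' That is not so: $A_t\in X^{\vee}$ requires a critical point with \emph{critical value zero}, and for generic $B$ (in particular $F(B,x_i)\neq 0$, and $\ell\not\subset X^{\vee}$) none of the critical values near $x_i$ vanish for small $t\neq 0$; the line meets $X^{\vee}$ near $A$ only at $A$ itself, so the transverse intersections you want to count do not exist, and a Morse point with nonzero critical value is not even a point of $Z$. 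The number $\mu_i$ of Morse points is numerically correct, but your argument never ties it to the intersection length at $t=0$. The standard repair is to track critical values rather than critical points: writing $f=F(A,\cdot)$, $g=F(B,\cdot)$ in a chart at $x_i$, the critical points $x_j(t)$ of $f+tg$ near $x_i$ number $\mu_i$ with multiplicity (length of the Jacobian scheme), and the critical values satisfy $\tfrac{d}{dt}c_j(t)=g(x_j(t))\to F(B,x_i)\neq 0$, so each $c_j$ vanishes to order exactly one and the branch of $X^{\vee}$ at $A$ attached to $x_i$ meets $\ell$ with multiplicity $\mathrm{ord}_{t=0}\prod_j c_j(t)=\mu_i$; equivalently, eliminating $t$ from $f+tg=\nabla f+t\nabla g=0$ gives the ideal $(\nabla(f/g))$, of colength $\mu(f/g)=\mu_i$ since $g$ is a unit at $x_i$. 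Relatedly, ``pulling the length back through $\pi_1$'' is loose: lengths are not preserved under pullback along a proper birational map; what properness actually gives is that critical points of nearby tangent hyperplanes accumulate only at the $x_i$, after which the multiplicity must be computed as above (or by perturbing $\ell$ off $A$ and invoking conservation of number). The final assertion, that $\mu=1$ exactly when the Hessian is nondegenerate, is indeed just the Morse lemma and is fine.
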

We see that for all $\bigwedge^k\Cc^N$ except $\bigwedge^3\Cc^6$, $\bigwedge^3\Cc^7$,$\bigwedge^3\Cc^8$, all of the irreducible components of $X^{\vee}_{node}$ and $X^{\vee}_{cusp}$ are of codimension one in $X^{\vee}$. By proposition \ref{WZlemma}, the generic point $A$ of each of them has finitely many critical points in $X$. By \ref{dimcalemma} the multiplicity of $X^{\vee}$ at $A$ is $\geq 2$ which implies $A$ is a singular point of $X^{\vee}$. Therefore $X^{\vee}_{node}\cup X^{\vee}_{cusp}\subseteq X^{\vee}_{sing}$. We combine it with \ref{firstinclusion} to get:
\begin{align}
X^{\vee}_{sing}=X^{\vee}_{node}\cup X^{\vee}_{cusp}
\end{align}
For the remaining cases, since $X^{\vee}_{node}\subset X^{\vee}_{cusp}$, we get $X^{\vee}_{sing}=X^{\vee}_{cusp}$. Also one can check this by just looking at their orbits \cite{KW1},\cite{KW2}, \cite{hol}.

\begin{theorem}\label{cuspnodedifferent}
The cusp and the generic node components are different for the dual Grassmannians except $k=3,N=6,7,8$. Therefore we obtain nontrivial decomposition:
$X^{\vee}_{sing}=X^{\vee}_{node}\cup X^{\vee}_{cusp}$.
\end{theorem}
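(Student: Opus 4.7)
The plan is to reduce the inequality $X^{\vee}_{cusp} \neq X^{\vee}_{node}$ to non-containment of irreducible varieties of equal dimension, and then to produce a node form outside the cusp locus using the construction already in place for Theorem~\ref{thmnode}.

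By Theorems~\ref{thmcusp} and \ref{thmnode}, under the hypotheses $k\ge 4,\,N\ge 2k$ or $k=3,\,N\ge 9$, both $X^{\vee}_{cusp}$ and $X^{\vee}_{node}$ are irreducible hypersurfaces in $X^{\vee}$, hence of the same dimension. Two irreducible subvarieties of equal dimension coincide iff one is contained in the other, so it suffices to exhibit a single $A\in X^{\vee}_{node}\setminus X^{\vee}_{cusp}$. The construction from the proof of Theorem~\ref{thmnode} provides, for each admissible $(k,N)$, an array $A\in\nabla(x^0)\cap\nabla(x')$ for which both Hessians $H(F(A,x))\vert_{x^0}$ and $H(F(A,x))\vert_{x'}$ are simultaneously invertible; such $A$ lies in $X^{\vee}_{node}$ by definition.

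To conclude, I would argue that for a generic such $A$ the only critical points of $F(A,\cdot)$ on $X$ are $x^0$ and $x'$. Given that, no critical point of $A$ carries a degenerate Hessian, so $A\notin X^{\vee}_{cusp}$. The argument rests on two ingredients: (i) Proposition~\ref{WZlemma}, which confines arrays with infinitely many critical points to codimension $\ge 2$ in $X^{\vee}$; and (ii) a dimension count on the incidence variety $Z^2$ showing that arrays in $\nabla(x^0)\cap\nabla(x')$ admitting a third critical point form a proper subvariety. Intersecting the resulting Zariski-dense open condition with the open condition ``both Hessians at $x^0, x'$ are invertible'' --- nonempty by the construction above --- produces a Zariski-dense open locus of candidates, and picking any one establishes non-containment.

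The main obstacle is the dimension count in (ii); directly controlling the full critical locus of a generic multilinear form is delicate. A robust alternative is to appeal to Proposition~\ref{dimcalemma}: for the constructed $A$, the Milnor numbers at $x^0$ and $x'$ equal $1$, so if the critical set is exactly $\{x^0,x'\}$ then $\mathrm{mult}_A(X^{\vee})=2$; any additional critical point with degenerate Hessian would push this multiplicity strictly higher, a condition we can rule out by perturbing $A$ within $\nabla(x^0)\cap\nabla(x')$ and invoking lower semicontinuity of multiplicity. Either route yields $X^{\vee}_{node}\not\subseteq X^{\vee}_{cusp}$, which combined with the previously established $X^{\vee}_{sing}\subseteq X^{\vee}_{node}\cup X^{\vee}_{cusp}$ gives the claimed nontrivial decomposition.
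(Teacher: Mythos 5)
Your strategy is genuinely different from the paper's. You reduce the problem to exhibiting a single point $A\in X^{\vee}_{node}\setminus X^{\vee}_{cusp}$, and the reduction itself is sound: two irreducible closed subvarieties of the same dimension are equal iff one contains the other. But the step you flag as ``the main obstacle'' is a real gap, and neither of the two routes you offer closes it. Having non-degenerate Hessians at $x^0$ and $x'$ does not by itself exclude $A$ from $X^{\vee}_{cusp}$: that variety is defined by degeneracy of the Hessian at \emph{some} critical point, and $X^{\vee}_{cusp}=\overline{\nabla^0_{cusp}\cdot E}$ is a closure, so you need both (a) control over the full critical locus of $A$, not just the two points you constructed, and (b) that $A$ avoids the boundary of the cusp stratum. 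You never carry out the dimension count on $Z^2$. The Milnor-number fallback is also shaky: multiplicity of a variety at a point is \emph{upper} semicontinuous (the locus where it is $\geq m$ is closed), not lower as you write, and even with the sign corrected the argument does not rule out a third critical point with degenerate Hessian — it presupposes that the critical set is exactly $\{x^0,x'\}$, which is the very thing you need to establish.

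The paper sidesteps all of this with a conormal-space argument that requires no control over the global critical locus. It computes the generic fiber of the conormal bundle to each component inside $\cA\cong\bigwedge^k(\Cc^N)^{\ast}$: for $X^{\vee}_{node}$, the generic conormal fiber is spanned by two decomposable tensors $u_1\wedge\cdots\wedge u_k$ and $v_1\wedge\cdots\wedge v_k$ with all $u_i,v_j$ linearly independent; for $X^{\vee}_{cusp}$, the generic conormal fiber sits inside a subspace
\begin{align*}
W=\sum_{j=1}^{k} w_1\wedge\cdots\wedge w_{j-1}\wedge(\Cc^N)^{\ast}\wedge w_{j+1}\wedge\cdots\wedge w_{k},
\end{align*}
and any two decomposable tensors in such a $W$ must share a proportional factor $w_j$. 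So the generic conormal fiber of $X^{\vee}_{node}$ can never lie in that of $X^{\vee}_{cusp}$, and the two hypersurfaces are distinct. This is a purely linear-algebraic separation of the conormal varieties and does not touch the question of how many critical points a generic node form has. If you want to pursue your route, the honest fix is to actually perform the incidence-variety dimension count to show that forms in $\nabla(x^0)\cap\nabla(x')$ with a third critical point lie in a proper closed subvariety, and then argue that the complement of this subvariety together with the Hessian-invertibility condition is nonempty open and avoids $X^{\vee}_{cusp}$; that is a workable but substantially longer argument than the paper's.
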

\begin{proof}
To prove $\nabla_{node}(\emptyset)\neq X^{\vee}_{cusp}$ in $\cA$, we modify arguments used in \cite{WZ}. We will construct conormal bundles of cusp and node component, and show that their generic fibers are different. We can identify the space of arrays $\cA$ with $\bigwedge^k(\Cc^{\ast})^N$. Let $e_1,e_2,\ldots,e_N$ be a basis of $(\Cc^{\ast})^N$. Now we can make a pairing between $da_{i_1\ldots i_k}$ and $e_{i_1}\wedge\cdots\wedge e_{i_k}$. We deduce the following:
\begin{itemize}
\item For a generic $A\in\nabla(x^0)\cap\nabla(x')$, the conormal space $T_{A,X^{\vee}_{node}}\cA$ is the two dimensional subspace in $\cA$ spanned by $e_1\wedge\cdots\wedge e_k$ and $e_{N-k+1}\wedge\cdots\wedge e_{N}$. By the group action, the generic fiber of the bundle $T_{X^{\vee}_{node}}\cA$ is spanned by two vectors $u_1\wedge\cdots\wedge u_k$ and $v_1\wedge\cdots\wedge v_k$ where all $u_i,v_j$ are linearly independent.
\item By similar arguments the generic fiber of the conormal bundle $T_{X^{\vee}_{cusp}}\cA$ is a two dimensional vector space of the form
\begin{align}
W=\sum\limits^{k}_{j=1} w_1\wedge\cdots\wedge w_{j-1}\wedge(\Cc^*)^N\wedge w_{j+1}\wedge\cdots\wedge w_{k}
\end{align} 
where $w_j\in(\Cc^*)^N$ and nonzero for all $j=1,\ldots,k$.
\end{itemize}
For any two alternating decomposable tensors  $u_1\wedge\cdots\wedge u_k$ and $v_1\wedge\cdots\wedge v_k$ contained in $W$, there is an index $j$ such that $u_j$ is proportional to $v_j$. Hence $W$ cannot contain a generic fiber of $T_{X^{\vee}_{node}}\cA$. This implies that $X^{\vee}_{node}\neq X^{\vee}_{cusp}$.
\end{proof}

\begin{remark} In \cite{landsberg}, the relationship between secant varieties and the node type subvarieties is given by:
\begin{align}
X^{\vee}_{node,j}=\sigma_j(X)^{\vee}
\end{align}
where $\sigma_j$ is the $j$th secant variety, and $X^{\vee}_{node,j}$ is the Zariski closure of the points of $X^{\vee}$ tangent to $X$ at least $j$ points. Moreover,  the decomposition  $X^{\vee}=X^{\vee}_{cusp}\cup X^{\vee}_{node,2}$ is by definition \cite{landsberg}, p220. In \cite{hol}, dimension of the secant variety $\sigma_2(X)$ was computed to derive the result \ref{resultnode}.
\end{remark}

\section{Symmetric Matrices with Diagonal Lacunae and Skew Symmetric Blocks}\label{hessian}

We consider the following type of matrices: 
 for given $k\geq 2$, and $N$, we construct a $k\left(N-k\right)\times k\left(N-k\right)$ matrix $H_{k,N}$ which has the block decomposition with
$k\times k$ skew symmetric matrices of size $\left(N-k\right)\times\left( N-k\right)$.

In other words, they have the following shape:
\begin{align}
H_{k,N}=\begin{bmatrix}
0 & A_{12} & \cdots &\cdots & A_{1k} \\
-A_{12} & 0 & \cdots &\cdots & A_{2k} \\ 
\vdots & \vdots & \ddots & &\vdots \\
\vdots&\vdots& &0 & A_{k-1\,k} \\
-A_{1k} & -A_{2k} & \cdots & -A_{k-1\,k} &0 
\end{bmatrix} 
\end{align}

where each $A_{ij}$ and $0$ are skew symmetric blocks of size $\left(N-k\right)\times\left(N-k\right)$. Notice that the Hessian matrices of dual Grassmannian is of this shape by simple computation of \ref{h1}.

\begin{theorem}\label{determinant} Let $H_{k,N}$ be the Hessian matrix of the dual Grassmanian $G\left(k,\Cc^N\right)^{\vee}$ evaluated at $x^0$ i.e. $H_{k,N}=H(F(A,x))_{x^0}$. Then we have the following:
the determinant of $H_{k,N}$ is an irreducible homogeneous polynomial for all $N\geq 2k$, $k\geq 3$ except $k=3, N=6,7$. In these cases we have:
\begin{enumerate}[label=(\arabic{*})]
\item $\det H_{3,6}$ is the cube of degree $3$ irreducible polynomial.
\item  $\det H_{3,7}$ is the square of degree $6$ irreducible polynomial.
\end{enumerate}
\end{theorem}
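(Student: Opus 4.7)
The plan is to combine a semi-invariance observation, an inductive specialization argument for the generic range, and an explicit reduction for the two small exceptional cases. The entries $a^{tt'}_{pp'}$ of the $k(N-k)\times k(N-k)$ Hessian matrix $H_{k,N}$ are skew-symmetric in both the upper and lower index pairs, so the space of such matrices is naturally identified with $\bigwedge^2\Cc^k\otimes\bigwedge^2\Cc^{N-k}$. Under the induced action of $G := GL_k\times GL_{N-k}$ on this space, $H_{k,N}$ represents a bilinear form on $\Cc^k\otimes\Cc^{N-k}$ on which $G$ acts via $g_1\otimes g_2$, so $\det H_{k,N}$ is a $G$-semi-invariant of degree $k(N-k)$. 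Any irreducible factor of $\det H_{k,N}$ must therefore itself be a $G$-semi-invariant.

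For the generic range ($k\geq 4$ and $N\geq 2k$, or $k=3$ and $N\geq 8$), I would argue by induction on $N$. Specializing to zero all variables $a^{tt'}_{pp'}$ with $N\in\{t,t'\}$ outside one chosen block makes $H_{k,N}$ block-triangular, with $H_{k,N-1}$ as one diagonal block and a smaller, explicitly analyzable block as the other. The specialized determinant factors as $\det H_{k,N-1}\cdot Q$ for an explicit auxiliary factor $Q$. If $\det H_{k,N}$ were reducible, its factors -- again semi-invariants -- would have to be compatible with this and with rotated specializations; combined with the inductive irreducibility of $\det H_{k,N-1}$ and the absence of $G$-semi-invariants of strictly smaller degree on $\bigwedge^2\Cc^k\otimes\bigwedge^2\Cc^{N-k}$ in this range, this forces a contradiction.

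The two exceptional cases reduce via $\bigwedge^2\Cc^3\cong(\Cc^3)^*$ to classical invariants. For $(3,6)$, the variables assemble into a single $3\times 3$ matrix $M$ on $\bigwedge^2\Cc^3\otimes\bigwedge^2\Cc^3\cong\mathrm{End}(\Cc^3)$, and direct expansion (e.g.\ of the explicit matrix in Example 2.4) gives $\det H_{3,6}=c\cdot(\det M)^3$ with $c\neq 0$. For $(3,7)$, the space $\bigwedge^2\Cc^3\otimes\bigwedge^2\Cc^4$ carries a classical degree-$6$ Pfaffian-type invariant whose square one verifies to equal $\det H_{3,7}$. In both cases, irreducibility of the building block ($\det M$ or the degree-$6$ invariant) is classical.

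The main obstacle is the tightness of the inductive step: ensuring that no irreducible factor of $\det H_{k,N}$ is annihilated under specialization and that the auxiliary factor $Q$ shares no common component with $\det H_{k,N-1}$. One route is to run several different specializations related by the $G$-action and use semi-invariance to exclude unwanted characters; an alternative finesse is to show that the singular locus of the hypersurface $\{\det H_{k,N}=0\}$ has codimension $\geq 2$, from which irreducibility of the defining equation follows.
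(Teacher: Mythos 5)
Your semi-invariance observation is correct and aligns in spirit with the paper's use of Cauchy's formula: the entries of $H_{k,N}$ live in $\bigwedge^{k-2}\Cc^k\otimes\bigwedge^2\Cc^{N-k}\cong\bigwedge^2(\Cc^k)^*\otimes\bigwedge^2\Cc^{N-k}$, and $\det H_{k,N}$ is a semi-invariant for $GL_k\times GL_{N-k}$; the paper exploits exactly this for $k=3$ by decomposing $\mathrm{Sym}_d(\Cc^3\otimes\bigwedge^2\Cc^{N-3})$ via Cauchy and reading off arithmetic constraints ($3\mid d$, $(N-3)\mid 2d$, $d\leq 3(N-3)$). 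Your treatment of the exceptional cases, with $\bigwedge^2\Cc^3\cong(\Cc^3)^*$ packaging the $k=3$ Hessian entries into a single $3\times 3$ matrix $M$, also matches the paper's Remark $\det H_{3,6}=2(\det M)^3$.

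However, your proposed inductive step has a genuine flaw. You claim that setting to zero the variables $a^{tt'}_{pp'}$ with $N\in\{t,t'\}$ outside one chosen block makes $H_{k,N}$ block-triangular with diagonal blocks $H_{k,N-1}$ and a ``smaller, explicitly analyzable block'' $Q$. But if the rows and columns are grouped by the second index $t$, the $k\times k$ diagonal block corresponding to $t=t'=N$ has entries $a^{NN}_{pp'}$, which vanish identically by skew-symmetry in the upper indices. So any specialization that isolates the index $N$ into its own diagonal block gives a matrix whose determinant is identically zero, and you learn nothing. The paper's specialization is structurally different and avoids this: it splits the skew-symmetric blocks $A_{ij}$ into four sub-blocks of sizes $k_1\times k_1$, $k_1\times k_2$, $k_2\times k_1$, $k_2\times k_2$ with $k_1+k_2=N-k$, and after reorganizing rows and columns, setting the off-diagonal sub-blocks to zero produces $H_{k,k+k_1}\oplus H_{k,k+k_2}$. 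Both summands are again Hessians of the same family (taking $k_1,k_2\geq 3$ for $k=3$, etc.) with generically nonvanishing determinants, so one genuinely reads off degrees of potential factors; comparing two such specializations via the paper's Lemma~\ref{keylemma} then pins down irreducibility by induction. Your ``$N\to N-1$'' reduction cannot be patched without effectively rediscovering this $H_{k,m}\oplus H_{k,n}\hookrightarrow H_{k,m+n-k}$ mechanism, and even then for $k$ odd the smallest summand $H_{k,k+2}$ has identically vanishing determinant, so the step size must be chosen with care (the paper uses $k+3$ or $k+4$ in those cases). Your fallback idea of bounding the codimension of the singular locus of $\{\det H_{k,N}=0\}$ is a plausible alternate route, but it is not developed and would require showing the polynomial is square-free and that its singular locus has codimension at least two in the hypersurface — neither of which you address.
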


\begin{remark} The dual variety is hypersurface if and only if there exists an invertible Hessian matrix at a smooth point \cite{GKZ}, \cite{WZ2}. So, by the theorem \ref{determinant}, for all $N\geq 2k$, $k\geq 3$, we conclude that the dual Grassmannian is  hypersurface in $\Pp\left(\bigwedge^k\Cc^N\right)$.
\end{remark}

\begin{remark} The determinant of the Hessian matrix $H_{3,6}$ \ref{example}  has a nice description: 
\begin{align}
\det H_{3,6}=2\det^{\quad\, 3}\begin{vmatrix}
a^{45}_{12} & a^{46}_{12} &a^{56}_{12}  \\
a^{45}_{13} & a^{46}_{13} & a^{56}_{13} \\ 
a^{45}_{23} &a^{46}_{23} & a^{56}_{23}
\end{vmatrix}
\end{align}
\end{remark}

We prove theorem \ref{determinant} in steps, first we study the case $k=3$.
\subsection{$\det H_{3,N}$}
Since $k=3$, we have the following $3\times 3$ block matrix
\begin{center}
$H_{3,N}=\left[\begin{matrix}
0&A_{12}&A_{13}\\
-A_{12}&0&A_{23}\\
-A_{13}&-A_{23}&0\end {matrix}\right] $ .
\end{center}

To analyze its determinant we use Young diagrams. Consider the direct sum decomposition of $\Cc^N$ to $\Cc^3\oplus\Cc^{N-3}$. Entries of the Hessian matrix can be identified by the quadratic part of 
 $\bigwedge^3 \Cc^N$ which is $\Cc^3\otimes\bigwedge^2 \Cc^{N-3}$. 
 
The existence of an invariant of degree $d$ is equivalent to the existence of rectangular diagrams of size $d$ in the decomposition of $Sym_d\left(\Cc^3\otimes\bigwedge^2\Cc^{N-3}\right)$. We apply Cauchy's formula to get:
\begin{align}
Sym_d\left(\Cc^{3}\otimes\bigwedge^{2}
     \Cc^{N-3}\right)=\bigoplus_{\mid\lambda\mid\vdash d} S_{\lambda}\Cc^{3}\otimes S_{\lambda}\left(\bigwedge^{2}\Cc^{N-3}\right) 
\end{align}
where $\lambda=(\lambda_1\geq \lambda_2\geq \lambda_3)$ is Young diagram of height  $ht \left(\lambda\right)\leq 3$. Otherwise, the representation is $S_{\lambda}\Cc^3\cong 0$.

Existence of rectangular partitions impose the arithmetic conditions:

\begin{align*}
 3\mid d, &\hspace{10pt} (N-3) \mid 2d, \hspace{10pt} d\leq 3\left(N-3\right)
\end{align*}
The last condition arose since we are looking for possible polynomial factors of the determinant and the degree of it is $3\left(N-3\right)$. If $N-3$ is a prime number greater than $3$, $\det H_{3,N}$ is irreducible, since $d$ becomes the product of two primes. This implies $\det H_{3,8}$ and $\det H_{3,10}$ are irreducible polynomials of degrees $15$ and $21$ respectively. We obtain the factors of exceptional cases and irreducibility of $\det H_{3,9}$ by using MAPLE. \\
Before the analysis of the other cases, we point out that by the canonical isomorphism $G\left(k,\Cc^N\right)\cong G\left(N-k,(\Cc^N)^{\ast}\right)$, we get $H_{k,N}=H_{N-k,N}$. We give a direct computational proof:

\begin{proposition}
The Hessian matrix of type $H_{k,N}$ can be obtained by row and column changes on $H_{N-k,k}$. 
\end{proposition}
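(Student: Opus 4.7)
The plan is to make the isomorphism $G(k,\Cc^N)\cong G(N-k,(\Cc^N)^{\ast})$ explicit at the level of Hessian matrices and verify that it is implemented by a permutation of rows and columns followed by a diagonal sign change, both of which are elementary row and column operations.

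First I would index the dual Grassmannian $G(N-k,\Cc^N)$ by taking $\{k+1,\ldots,N\}$ as its affine base (so that it plays the role that $\{1,\ldots,k\}$ plays for $G(k,\Cc^N)$). With this choice, the rows and columns of $H_{N-k,N}$ are naturally indexed by pairs $(t,p)$ with $t\in\{k+1,\ldots,N\}$ a position and $p\in\{1,\ldots,k\}$ an inserted value, which is the \emph{same} index set as for $H_{k,N}$ but with the two coordinates swapped. Let $\sigma\colon(p,t)\mapsto(t,p)$ and denote by $P$ the associated permutation matrix.

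Next I would compare entries using Pl\"ucker duality $a_I=\epsilon(I)\,\tilde{a}_{\bar I}$, where $\bar I=\{1,\ldots,N\}\setminus I$ and $\epsilon(I)\in\{\pm1\}$ is the shuffle sign. The nonzero entry of $H_{k,N}$ at $((p,t),(p',t'))$ is $\pm a_I$ with $I=(\{1,\ldots,k\}\setminus\{p,p'\})\cup\{t,t'\}$; its complement $\bar I=(\{k+1,\ldots,N\}\setminus\{t,t'\})\cup\{p,p'\}$ is precisely the Pl\"ucker index attached to the entry of $H_{N-k,N}$ at $(\sigma(p,t),\sigma(p',t'))$. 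Hence $P\,H_{k,N}\,P^{T}$ and $H_{N-k,N}$ agree entry-by-entry up to the sign $\epsilon(I)$.

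Finally I would verify that $\epsilon(I)$, restricted to index sets that differ from $\{1,\ldots,k\}$ in exactly two positions, factors as a product $\delta_{(p,t)}\,\delta_{(p',t')}$, so that the sign discrepancy can be absorbed by conjugation with a diagonal matrix $D=\mathrm{diag}(\delta_{(p,t)})$. A direct inversion count gives $\epsilon(I)=(-1)^{(t+t')-(p+p')}$, so one may take $\delta_{(p,t)}=(-1)^{p+t}$; then $D\,P\,H_{k,N}\,P^{T}\,D=H_{N-k,N}$, and since $P$ and $D$ are composed of elementary row and column operations, the proposition follows. The main obstacle is precisely this last step — ensuring the shuffle signs factor through the row/column index pair — which reduces to a short parity calculation once the orderings of $I$ and $\bar I$ are fixed.
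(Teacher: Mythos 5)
Your proof is correct but takes a genuinely different route from the paper. The paper's proof is a direct combinatorial verification: it writes down an explicit permutation of the $k(N-k)$ row/column indices (for $k=3$, the order $1, N-2, 2N-5 \mid 2, N-1, 2N-4 \mid \ldots$, and the analogous interleaving in general), conjugates the generic matrix $H_{k,N}$ by the corresponding permutation matrix, and simply observes that the result has the $(N-k)\times(N-k)$ block structure with $k\times k$ skew-symmetric blocks required for $H_{N-k,N}$. Since $H_{k,N}$ is defined in that section as the \emph{generic} matrix of its shape, matching the shape is all that is needed, and no sign bookkeeping arises at all. You instead make the isomorphism $G(k,\Cc^N)\cong G(N-k,(\Cc^N)^{\ast})$ explicit at the level of Pl\"ucker coefficients, which forces you to track the shuffle signs $\epsilon(I)$ relating $a_I$ and $\tilde a_{\bar I}$ and to absorb them into an extra diagonal sign-conjugation $D$. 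What you get in return is a stronger statement: the actual Hessian of the dual Grassmannian $G(k,\Cc^N)^{\vee}$ at $x^0$ and the actual Hessian of $G(N-k,(\Cc^N)^{\ast})^{\vee}$ at the complementary base point are identified by $D P (\cdot) P^T D$, not merely that the two \emph{generic shapes} coincide. That is a useful geometric bridge, but it is more than the proposition requires; the parity check you flag as ``the main obstacle'' is precisely the work the paper avoids by staying at the level of shapes. Both arguments yield the same downstream consequence $\det H_{k,N}=\pm\det H_{N-k,N}$ (your extra $D$ contributes $\det D^2=1$), so nothing is lost either way.
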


\begin{proof}
For the matrix $H_{3,N}$, arrange rows and columns in this order:\\ 
$\left[1, N-2, 2N-5\vert 2,N-1,2N-4\vert\ldots\vert N-3,2N-6,3N-9\right]$. It gives the block structure of $H_{N-k,N}$.
For general $k$ and $N$, the row and the column changes below gives the block structure of $H_{N-k,N}$:
\begin{gather*}
\left[ 1, N-k+1, 2\left(N-k\right)+1\ldots, \left(k-1\right)\left(N-k\right)+1\right] \\
\left[ 2,N-k+2,2\left(N-k\right)+2\ldots, \left(k-1\right)\left(N-k\right)+2\right] \\
\vdots \\
\left[ N-k,2\left(N-k\right),\ldots,k\left(N-k\right)\right]
\end{gather*}
\end{proof}

Let $S$ be a skew symmetric matrix of size $k\times k$. We can express it in terms of smaller blocks i.e. 
\begin{align}\label{skewsym}
 S = 
 \begin{bmatrix}
S_1& U \\ 
-U^t& S_2 \\ 
 \end{bmatrix}
 \end{align}
where the blocks on diagonal $S_i$ are skew symmetric of size $k_1,k_2$ with $2\leq k_1,k_2$ and $k_1+k_2=k$ and $U$ is the $k_1\times k_2$ rectangular block with arbitrary entries, $-U^t$ is the $k_2\times k_1$ block which is the transpose of $U$ multiplied by $-1$. \\
We want to make a similar block decomposition to $H_{3,N}$. If the shape of $H_{3,N}$ is 
\begin{center}
$H_{3,N}=\left[\begin{matrix}
0^{\prime}&A^{\prime}&B^{\prime}\\
-A^{\prime}&0^{\prime}&C^{\prime}\\
-B^{\prime}&-C^{\prime}&0^{\prime}\end {matrix}\right] $ .
\end{center}
where $A^{\prime},B^{\prime},C^{\prime}$ are skew symmetric matrices of size $\left(N-3\right)\times\left(N-3\right)$, $0^{\prime}$ are the zero matrices matrix of size $\left(N-3\right)\times\left(N-3\right)$ then we can divide $A^{\prime},B^{\prime},C^{\prime}$  into blocks described in  (\ref{skewsym}) above. Now $H_{3,N}$ becomes :

\begin{center}
\[H_{3,N}=
  \begin{bmatrix}
    \begin{array}{cc|cc|cc}
0&0&A_1&A&B_1&B\\
0&0&-A^t&A_2&-B^t&B_2\\\hline
-A_1&-A&0&0&C_1&C\\
A^t&-A_2&0&0&-C^t&C_2\\\hline
-B_1&-B&-C_1&-C&0&0\\
B^t&-B_2&C^t&-C_2&0&0\\
 \end{array}
  \end{bmatrix}
\].
\end{center}

with sizes $A_1,B_1,C_1=k_1\times k_1$, $A_2,B_2,C_2=k_2\times k_2$, $k_1+k_2=N-3$, $k_1\geq k_2$. After row and column operations $\left(2,4,5,3\right)$ applied to blocks we get:

\begin{align}\label{m3}
H_{3,N}=
  \begin{bmatrix}
    \begin{array}{ccc|ccc}
  0&A_1&B_1&0&A&B\\
-A_1&0&C_1&-A&0&C\\
-B_1&-C_1&0&-B&-C&0\\\hline
0&-A^t&-B^t&0&A_2&B_2\\
A^t&0&-C^t&-A_2&0&C_2\\
B^t&C^t&0&-B_2&-C_2&0\\
    \end{array}
  \end{bmatrix}
\end{align}

The first and the last $3\times 3$ blocks on the diagonal are in the form of $H_{3,k_1+3}$ and $H_{3,k_2+3}$. If we call the upper right $3\times 3$ blocks  $U$, the complementary block is the transpose of $U$. \\

Before the analysis of determinants, we state a basic tool which we do not know any reference for it:
\begin{lemma}\label{keylemma}
Let $P$ be a homogeneous multivariate polynomial. Suppose that by setting some variables to zero we get $P'=P_1\times P_2$, $\deg P_1\geq \deg P_2$ and $P_1,P_2$ are irreducible homogeneous polynomials. Similarly, suppose that by setting some variables to zero  $P''=Q_1\times Q_2$, $\deg Q_1\geq \deg Q_2$ and $Q_1,Q_2$ are irreducible homogeneous polynomials. If $\deg P_1>\deg Q_1 $ and $\deg P_1\neq \deg Q_2$, then $P$ has to be an irreducible polynomial.
\end{lemma}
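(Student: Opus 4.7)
The plan is to proceed by contradiction based on comparing irreducible factorizations through the two specializations. Suppose that $P$ is reducible and write $P=R_1R_2\cdots R_s$ as a product of irreducible homogeneous factors counted with multiplicity, so $s\geq 2$. First I would check that under either of the specializations in question no irreducible factor $R_i$ can drop to a constant. This uses only homogeneity: any specialization (setting a subset of variables to zero) of a homogeneous polynomial of positive degree is either identically zero or is again homogeneous of the same positive degree. Since $P'=P_1P_2\neq 0$, no $R_i'$ can vanish, and hence $\deg R_i'=\deg R_i>0$ for every $i$; the same applies to $R_i''$.

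Next I would count irreducible factors. From $P'=c\,R_1'R_2'\cdots R_s'$, each $R_i'$ contributes at least one irreducible factor, while $P_1P_2$ has exactly two irreducible factors counted with multiplicity (regardless of whether $P_1$ and $P_2$ are proportional). Hence $s\leq 2$, and combined with $s\geq 2$ this forces $s=2$ and makes each $R_i'$ itself irreducible. Unique factorization then yields $\{R_1',R_2'\}=\{P_1,P_2\}$ as multisets up to scalars, and in particular $\{\deg R_1,\deg R_2\}=\{\deg P_1,\deg P_2\}$. Running the identical argument for $P''=Q_1Q_2$ gives $\{\deg R_1,\deg R_2\}=\{\deg Q_1,\deg Q_2\}$, so the two target multisets coincide:
\[
\{\deg P_1,\deg P_2\}=\{\deg Q_1,\deg Q_2\}.
\]
But the hypotheses $\deg P_1>\deg Q_1\geq\deg Q_2$ and $\deg P_1\neq\deg Q_2$ put $\deg P_1$ outside $\{\deg Q_1,\deg Q_2\}$, contradicting this equality of multisets. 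The conclusion is that no such factorization of $P$ can exist, so $P$ is irreducible.

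The argument is essentially just unique factorization in the polynomial ring plus the rigidity of homogeneous polynomials under linear specialization. The only step that I regard as subtle---and thus the main obstacle---is the very first one, namely ruling out the case that some $R_i$ silently degenerates to a constant under the specialization; without this control the later counting of irreducible factors of $P'$ and $P''$ would not immediately pin down $s=2$. Everything after that reduces to bookkeeping of multisets of degrees.
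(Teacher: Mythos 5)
Your argument is correct and rests on the same key idea as the paper's very terse proof, namely that setting variables to zero preserves the degree of any homogeneous polynomial it does not annihilate, after which unique factorization pins down the degree multisets. You have simply spelled out the factor-counting and multiset comparison that the paper leaves implicit, so the approach matches; the one thing worth noting is that the hypothesis $\deg P_1\neq\deg Q_2$ is already forced by $\deg P_1>\deg Q_1\geq\deg Q_2$ and plays no independent role in the contradiction.
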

\begin{proof}
Assume to the contrary that $P$ has nonconstant polynomial factors. Specialization with zero cannot increase the degrees of those irreducible factors. Thus, the polynomial factors assumed in the lemma can only appear if $P$ is irreducible.
\end{proof}

After setting $A,B,C$ to zero in \ref{m3} we have $H_{3,k_1+3}\oplus H_{3,k_2+3}\mapsto H_{3,N}$, where the matrix $H_{3,N}$ has the zero blocks. We call this method \emph{specialization} and it allows us to embed smaller matrices into larger ones.

\begin{lemma}
If $N\geq 11$, $\det H_{3,N}$ is an irreducible polynomial.
\end{lemma}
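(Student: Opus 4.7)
The plan is to induct on $N$, combining the arithmetic constraint from the $GL_3\times GL_{N-3}$-decomposition of $Sym_d\left(\Cc^3\otimes\bigwedge^2\Cc^{N-3}\right)$ with the block-diagonal specialization $A=B=C=0$ introduced in \eqref{m3}. Recall that any irreducible factor of $\det H_{3,N}$ must have degree $d$ satisfying $3\mid d$, $(N-3)\mid 2d$, and $d\leq 3(N-3)$, while the specialization of $\det H_{3,N}$ under $A=B=C=0$ equals $\det H_{3,k_1+3}\cdot\det H_{3,k_2+3}$ for any partition $N-3=k_1+k_2$ with $k_i\geq 2$. If $\det H_{3,N}=P\cdot Q$ is a nontrivial factorization, then for any splitting in which the right-hand side is nonzero, each of $P|_{A=B=C=0}$ and $Q|_{A=B=C=0}$ is a nonzero homogeneous polynomial of the same degree as $P$, $Q$, and must factor as a product (with multiplicities) of the irreducible factors of the specialized determinant, whose degrees are known either by induction or from the base cases $\det H_{3,m}$ for $m\in\{6,7,8,9,10\}$.

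For $N\geq 15$ the argument is clean: choose the two splittings $(N-8,5)$ and $(N-9,6)$. By the induction hypothesis together with the base cases, all four of $\det H_{3,N-5}$, $\det H_{3,8}$, $\det H_{3,N-6}$, $\det H_{3,9}$ are irreducible, so each specialization displays $\det H_{3,N}$ as a product of exactly two irreducible polynomials, with degree pairs $(3(N-8),15)$ and $(3(N-9),18)$. Since $3(N-8)>3(N-9)$ and $3(N-8)\neq 18$ for $N\neq 14$, Lemma~\ref{keylemma} applies and forces irreducibility. The case $N=14$ is handled by the arithmetic constraint alone: $N-3=11$ is prime and coprime to $2$, so $3\mid d$ and $11\mid d$ imply $33\mid d$, leaving only $d=3(N-3)=33$.

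The delicate cases are $N=11,12,13$, for which no splitting of $N-3$ avoids one of the exceptional smaller determinants $\det H_{3,6}$ (a cube of an irreducible of degree $3$) or $\det H_{3,7}$ (a square of an irreducible of degree $6$), so the key lemma does not apply directly. Here the arithmetic constraint pins the possible proper-factor degree $d$ to a very short list: $d=12$ for $N=11$, $d\in\{9,18\}$ for $N=12$, and $d=15$ for $N=13$. For each candidate one checks that the linear diophantine condition expressing $d$ as $15a+3b$, $15a+6b$, or $18a+6b$ (with $a,b$ bounded by the multiplicities of the irreducibles in the specialization) admits no solution, thereby ruling out $d$. For instance, for $N=11$ under the splitting $(5,3)$ one needs $15a+3b=12$ with $0\leq a\leq 1$ and $0\leq b\leq 3$, which is impossible; for $N=13$ under $(6,4)$ one needs $18a+6b=15$, again impossible.

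The main obstacle is the edge-case bookkeeping at $N=11,12,13$ together with the need to avoid splittings with $k_i=2$, since $\det H_{3,5}\equiv 0$ (reflecting that $G(2,5)^{\vee}$ is not a hypersurface) would collapse the specialization to zero and destroy the argument. Once these obstructions are navigated, the interplay between the arithmetic degree constraint and the specialization factorization is enough to force irreducibility for all $N\geq 11$.
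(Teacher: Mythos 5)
Your proof is correct, and it takes a genuinely different (and in fact tighter) route than the paper's. The paper fixes the specializations $H_{3,6}\oplus H_{3,N-3}$ and $H_{3,7}\oplus H_{3,N-4}$, which peel off the \emph{reducible} corner determinants $\det H_{3,6}$ and $\det H_{3,7}$; the resulting specialized determinants have four and three irreducible factors, so Lemma~\ref{keylemma}, which is stated for a product of exactly two irreducibles, does not apply verbatim, and the paper uses it somewhat informally in both the base case $N=11$ and the inductive step. You instead choose, for $N\geq 15$, the splittings $(N-8,5)$ and $(N-9,6)$, giving $\det H_{3,N-5}\cdot\det H_{3,8}$ and $\det H_{3,N-6}\cdot\det H_{3,9}$; since $\det H_{3,8}$ and $\det H_{3,9}$ are irreducible, each specialization has exactly two irreducible factors, so Lemma~\ref{keylemma} applies literally with degree pairs $\bigl(3(N-8),15\bigr)$ and $\bigl(3(N-9),18\bigr)$, and your inequalities $3(N-8)>3(N-9)$ and $3(N-8)\neq 18$ are exactly its hypotheses. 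For the remaining cases you do something the paper leaves implicit: you explicitly combine the Young-diagram arithmetic constraint $3\mid d$, $(N-3)\mid 2d$, $0<d<3(N-3)$ to pin $d$ to a short list ($d=12$ for $N=11$; $d\in\{9,18\}$ for $N=12$; $d=15$ for $N=13$), then eliminate each candidate by a subset-sum check against a single block specialization, and you dispose of $N=14$ outright from the arithmetic constraint since $N-3=11$ is prime. This interleaving of the two constraints is the key conceptual move: the paper's fixed specializations alone leave a common coarsening $\{6,\,3N-15\}$ unresolved (which is potentially problematic at $N=15$, where $d=6$ also passes the arithmetic test), whereas your choice of specializations avoids this issue entirely. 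You also rightly flag that splittings with $k_i=2$ must be avoided because $\det H_{3,5}\equiv 0$. The base cases $\det H_{3,m}$ irreducible for $m\in\{8,9,10\}$ and the factored forms for $m\in\{6,7\}$ are imported from the paper's earlier discussion, as they should be. In short, your argument is correct, relies on the same two tools (the Cauchy/Young-diagram degree constraint and the block-diagonal specialization plus Lemma~\ref{keylemma}), but organizes them so that the key lemma is applied only in the two-irreducible-factor situation for which it is actually stated.
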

\begin{proof}
Consider the specializations:
\begin{align}\label{g3embed}
H_{3,6}\oplus H_{3,N-3}\mapsto H_{3,N}\\
H_{3,7}\oplus H_{3,N-4}\mapsto H_{3,N}
\end{align}
For small $N$'s we give the possible polynomial factors of these embeddings, then we derive a contradiction. By the lemma \ref{keylemma}, it is enough to analyze the degrees of possible polynomial factors of the determinant. If they are not compatible with each other we conclude that it has to be an irreducible polynomial.
First we analyze $H_{3,11}$. We can embed $H_{3,6}$ and $H_{3,8}$ into $H_{3,11}$ and $H_{3,7}\oplus H_{3,7}$ as in \ref{g3embed}. Degrees of irreducible factors are $3+3+3+15$ and $6+6+6+6$ which are not compatible by lemma \ref{keylemma}, hence $\det H_{3,11}$ is irreducible polynomial of degree $24$.\\
Now we can use proof by induction. We have showed that $\det H_{3,8}, \det H_{3,10}$,$\det H_{3,11}$ are irreducible. Assume that for all $11\leq n<N$ $\det H_{3,n}$ is irreducible, and they have degree $3\left(n-3\right)$. If we apply embedding \ref{g3embed}, we get possible factors of $\det H_{3,N}$ as $3+3+3+3\left(N-6\right)$ and $6+6+3\left(N-7\right)$ and they are not compatible by lemma \ref{keylemma}. This forces that $\det H_{3,N}$ is irreducible.

\end{proof}

\subsection{$\det H_{4,N}$}
Now we focus on $H_{4,N}$, $N\geq 8$. The matrix we are dealing with is:
\begin{align}\label{Hess4N}
\left[\begin{matrix}
0&A_{12}&A_{13}&A_{14}\\
-A_{12}&0&A_{23}&A_{24}\\
-A_{13}&-A_{23}&0&A_{34}\\
-A_{14}&-A_{24}&-A_{34}&0\end {matrix}\right] 
\end{align}

Observe that if we set $A_{34}$ to be the zero matrix in (\ref{Hess4N}), $\det H_{4,N}$ is the square of the irreducible determinant. Together with lemma \ref{keylemma} this implies that $\det H_{4,N}$ has at most two factors of the same degree.

\begin{lemma}
If $N\geq 8$, $\det H_{4,N}$ is irreducible
\end{lemma}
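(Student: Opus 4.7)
The plan is induction on $N$. The observation already recorded before the statement shows that setting $A_{34}=0$ yields a matrix whose bottom-right $2\times 2$ block (in the block decomposition of $H_{4,N}$) vanishes, so by the block-determinant formula $\det H_{4,N}|_{A_{34}=0}=\pm(\det B)^{2}$, where $B=\left[\begin{smallmatrix}A_{13}&A_{14}\\A_{23}&A_{24}\end{smallmatrix}\right]$; thus $\det H_{4,N}|_{A_{34}=0}=\pm P^{2}$ with $P$ irreducible of degree $2(N-4)$. By Lemma \ref{keylemma} this constrains any nontrivial factorization of $\det H_{4,N}$ to have the shape $f_{1}f_{2}$ with $\deg f_{1}=\deg f_{2}=2(N-4)$, and my task reduces to excluding this equal-degree case.

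For the inductive step I would mimic the block decomposition that produced the display (\ref{m3}) in the $H_{3,N}$ case: given a split $N-4=k_{1}+k_{2}$ with $k_{1},k_{2}\geq 4$, decompose each skew-symmetric $(N-4)\times(N-4)$ block $A_{ij}$ into two diagonal skew-symmetric pieces of sizes $k_{1},k_{2}$ together with a rectangular off-diagonal piece, and specialize all the rectangular pieces to zero. After the same row/column rearrangement used for $k=3$, the result is block-diagonal with blocks of the form $H_{4,k_{1}+4}$ and $H_{4,k_{2}+4}$, so the determinant factors as $\det H_{4,k_{1}+4}\cdot\det H_{4,k_{2}+4}$. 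By the induction hypothesis both factors are irreducible, of degrees $4k_{1}$ and $4k_{2}$. Choosing $k_{1}=4$ and $k_{2}=N-8$ with $k_{1}\neq k_{2}$, i.e.\ $N\geq 13$, the larger factor has degree $4(N-8)>2(N-4)$; combining this with the $A_{34}=0$ specialization via Lemma \ref{keylemma} rules out the equal-degree factorization and gives irreducibility of $\det H_{4,N}$.

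This leaves the base cases $N=8,9,10,11,12$, which I would handle in the same spirit as $\det H_{3,8},\det H_{3,9},\det H_{3,10}$. First, the Cauchy decomposition
\begin{align*}
Sym_{d}\left(\Cc^{4}\otimes\bigwedge\nolimits^{2}\Cc^{N-4}\right)=\bigoplus_{|\lambda|=d,\,ht(\lambda)\leq 4} S_{\lambda}\Cc^{4}\otimes S_{\lambda}\bigl(\bigwedge\nolimits^{2}\Cc^{N-4}\bigr)
\end{align*}
forces any invariant polynomial factor to come from a rectangular $\lambda$ of height exactly $4$, and for most small $N$ the resulting arithmetic constraints on $d=2(N-4)$ already exclude such a factor outright. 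The remaining cases I would complete by direct computer-algebra verification, in analogy with the MAPLE calculation used for $\det H_{3,9}$.

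The main obstacle is $N=12$: the only admissible inductive split is $k_{1}=k_{2}=4$, so the block specialization produces $H_{4,8}\oplus H_{4,8}$ with determinant $(\det H_{4,8})^{2}$, again a square of an irreducible polynomial of the same degree $16=2(N-4)$ as the hypothetical equal-degree factorization. At the level of Lemma \ref{keylemma} this is indistinguishable from a genuine factorization $f_{1}f_{2}$, so the case must be settled by a finer input---either a partial specialization zeroing only selected entries of an individual block $A_{ij}$ so as to break the symmetry between the two summands, or an explicit determination of the $SL_{4}\times SL_{8}$-invariants of $\Cc^{4}\otimes\bigwedge^{2}\Cc^{8}$ in degree $16$ showing that $\det H_{4,8}$ is, up to scalar, the unique such invariant.
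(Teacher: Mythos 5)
Your strategy coincides with the paper's at a high level: both specialize $\det H_{4,N}$ and use Lemma~\ref{keylemma} to constrain possible factor degrees. Your two specializations are also both recognizable from the paper: $A_{34}=0$ is the observation recorded just before the lemma (forcing any nontrivial factorization to split into two factors, each of degree $2(N-4)$), and $H_{4,k_1+4}\oplus H_{4,k_2+4}$ is the $k=4$ analogue of the $k=3$ block construction displayed in~\ref{m3}. The genuine gap is your restriction $k_1,k_2\geq 4$, i.e.\ insisting that both pieces be $H_{4,m}$ with $m\geq 8$ so that their determinants are irreducible by induction. The paper instead allows the \emph{reducible} small pieces $H_{4,6}$ and $H_{4,7}$, whose determinants have known factor-degree sequences $2+2+2+2$ and $6+6$; for the purposes of Lemma~\ref{keylemma} what matters is the degree sequence produced by the specialization, not irreducibility of the small piece itself. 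Using $H_{4,6}\oplus H_{4,N-2}$, the large block contributes an irreducible factor of degree $4(N-6)$, which exceeds $2(N-4)$ for every $N>8$; together with the $A_{34}=0$ observation this settles all $N\geq 10$ at once — in particular $N=12$, where the degree-$24$ factor from the $H_{4,10}$ block is incompatible with a $16+16$ split.

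So the obstacle you correctly flag at $N=12$ is not intrinsic; it is an artifact of disallowing $k_1=2$ or $k_1=3$. Admitting $H_{4,6}$ shrinks the base cases to $N=8,9$ (the paper checks both by computer; $N=9$ is in fact already covered by your Young-diagram argument since $4\nmid 2(9-4)=10$), and the induction then covers every $N\geq 10$ without any separate treatment of $N=10,11,12$. Two minor side remarks: in your $N=12$ discussion, $(\det H_{4,8})\cdot(\det H_{4,8})$ is a product of two irreducible polynomials in \emph{disjoint} sets of variables, not a literal square; and your $A_{34}=0$ step quietly takes as input that $\det B$ is irreducible — the paper asserts this without proof as well, so it is a shared assumption rather than something you have established.
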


\begin{proof}
By computer computation we verify that, $\det H_{4,8}$ and $\det H_{4,9}$ are irreducible.\\
Now we modify the method we used for $H_{3,N}$. For each $A_{ij}$ of matrix \ref{Hess4N}, we apply \ref{skewsym}. This allows us to embed smaller Hessian matrices into larger ones. It is enough to use the following specializations:

\begin{align}\label{g4embed1}
H_{4,6}\oplus H_{4,N-2}\mapsto H_{4,N}\\\label{g4embed2}
H_{4,7}\oplus H_{4,N-3}\mapsto H_{4,N}
\end{align}
For $N=10$, degrees of possible factors are $2+2+2+2+16$ and $6+6+6+6$. By \ref{keylemma}, they are compatible only $\det H_{4,10}$ is irreducible. \\
Now we can use induction. Suppose that for all $10\leq n<N$, $\det H_{4,n}$ is irreducible. By \ref{g4embed1} and \ref{g4embed2}, degrees of possible factors are $2+2+2+2+4\left(N-6\right)$ and $6+6+4\left(N-7\right)$. They are compatible if $\det H_{4,N}$ is irreducible. 
\end{proof}

\subsection{$\det H_{k,N}$, $k\geq 5$}
We start with the case $k=5$. We obtain irreducibility of $\det H_{5,10}$ by computer computation. For $H_{5,11}$, we can embed $H_{5,8}\oplus H_{5,8}$ and $H_{3,9}\oplus H_{2,8}$ by using duality and setting $A_{i,5}$, $i\leq 4$ to zero
respectively. Degrees of possible factors are $15+15$ and $18+3+3+3+3$, by lemma \ref{keylemma} we obtain irreducibility of $\det H_{5,11}$.\\
In the case of $H_{5,12}$, we can use Young diagrams. A degree $d$ invariant exists if and only if there exists rectangular diagrams satisfying $5\mid 3d$ and $7\mid 2d$. This implies $d=35$ which is the degree of determinant. 
For the remaining cases we can use induction together with the specializations:
\begin{align}
H_{5,8}\oplus H_{5,N-3}\mapsto H_{5,N}\\
H_{5,9}\oplus H_{5,N-4}\mapsto H_{5,N}
\end{align}

In general we can use the following specializations:
\begin{align}
H_{k,k+2}\oplus H_{k,N-2}\mapsto H_{k,N}\\
H_{k,k+3}\oplus H_{k,N-3}\mapsto H_{k,N}\\
H_{k,k+4}\oplus H_{k,N-4}\mapsto H_{k,N}
\end{align}
and induction to show that $\det H_{k,N}$ is irreducible depending on whether $k$ is odd or even. If $k$ is odd, $\det H_{k,k+2}$ is identically zero. The only uncovered case is $H_{7,14}$, we obtain its irreducibility by computer computation.

\subsection{Rank Lemmas} \label{ranklemmas}
In the proof of theorem \ref{thmcusp}, we used existence of Hessian matrices $H_{k,N}$ satisfying:
\begin{enumerate}[label=(\roman{*})]
\item It has corank $1$. 
\item Each row block of size $k\times k\left(N-k\right)$ is of full rank.
\end{enumerate}

Observe that, if there exist a matrix $A$ satisfying the above, then $A\oplus A^{\prime}$ satisfy it when $A'$ is of full rank. It is enough to produce those matrices for $(k,N)\in\left\{(3,8),(3,9),(3,10),(3,11), (4,8), (4,9), (5,10)\right\}$ because we can use the following specializations inductively:
\begin{enumerate}[label=(\arabic{*})]
\item $H_{3,6}\oplus H_{3,a}\hookrightarrow H_{3,a+3}$, the first matrix is of full rank.
\item $H_{4,6}\oplus H_{4,a}\hookrightarrow H_{4,a+2}$, the first matrix is of full rank.
\item $H_{5,8}\oplus H_{5,a}\hookrightarrow H_{5,a+3}$, the first matrix is of full rank.
\item $H_{3,N-k+3}\oplus H_{k-3,N-3}\hookrightarrow H_{k,N}$, the second matrix is of full rank. 
\end{enumerate}

\subsection{List of Matrices 1}
Here we give the list of matrices which satisfies the above conditions  \ref{ranklemmas}. We used them to prove the theorem \ref{thmcusp}.
\newpage
\begin{changemargin}{-3cm}{-3cm}
\begin{center}
$\bullet\bigwedge^3\Cc^9$\\\vspace{10pt}
$A_{12}\!=\!\left[ \begin {array}{cccccc} 0&1&0&0&1&0\\\noalign{\medskip}-1&0&1&0&1&1\\\noalign{\medskip}0&-1&0&0&1&0\\\noalign{\medskip}0&0&0&0&0&0
\\\noalign{\medskip}-1&-1&-1&0&0&0\\\noalign{\medskip}0&-1&0&0&0&0
\end {array} \right] 
$,
$A_{13}\!=\!\left[ \begin {array}{cccccc} 0&0&1&1&0&1\\\noalign{\medskip}0&0&0&1&0&1\\\noalign{\medskip}-1&0&0&0&1&0\\\noalign{\medskip}-1&-1&0&0&1&0
\\\noalign{\medskip}0&0&-1&-1&0&1\\\noalign{\medskip}-1&-1&0&0&-1&0
\end {array} \right] 
$,
$A_{23}\!=\!\left[ \begin {array}{cccccc} 0&1&1&0&1&1\\\noalign{\medskip}-1&0&0&1&1&1\\\noalign{\medskip}-1&0&0&0&0&0\\\noalign{\medskip}0&-1&0&0&0&0
\\\noalign{\medskip}-1&-1&0&0&0&1\\\noalign{\medskip}-1&-1&0&0&-1&0
\end {array} \right] 
$
\end{center}
\end{changemargin}
\hrulefill
\begin{changemargin}{-3.75cm}{-3.75cm}

\begin{center}
$\bullet\bigwedge^3\Cc^{10}$\\\vspace{10pt}
$A_{12}\!=\!\left[ \begin {array}{ccccccc} 0&0&0&1&1&0&0\\\noalign{\medskip}0&0&0&1&0&0&0\\\noalign{\medskip}0&0&0&1&0&1&1\\\noalign{\medskip}-1&-1&-1&0
&1&0&1\\\noalign{\medskip}-1&0&0&-1&0&1&0\\\noalign{\medskip}0&0&-1&0&
-1&0&1\\\noalign{\medskip}0&0&-1&-1&0&-1&0\end {array} \right]$\!\! $A_{13}\!=\!\left[ \begin {array}{ccccccc} 0&1&0&0&0&1&1\\\noalign{\medskip}-1&0&1&0&0&1&1\\\noalign{\medskip}0&-1&0&0&0&0&0\\\noalign{\medskip}0&0&0&0
&1&1&0\\\noalign{\medskip}0&0&0&-1&0&0&0\\\noalign{\medskip}-1&-1&0&-1
&0&0&1\\\noalign{\medskip}-1&-1&0&0&0&-1&0\end {array} \right]$\!\! $A_{23}\!=\!\!\left[ \begin {array}{ccccccc} 0&0&0&0&1&1&1\\\noalign{\medskip}0&0&0&1&0&0&0\\\noalign{\medskip}0&0&0&1&0&0&0\\\noalign{\medskip}0&-1&-1&0
&0&0&1\\\noalign{\medskip}-1&0&0&0&0&1&1\\\noalign{\medskip}-1&0&0&0&-
1&0&0\\\noalign{\medskip}-1&0&0&-1&-1&0&0\end {array} \right] $
\end{center}
\end{changemargin}
\hrulefill
\begin{changemargin}{-2.5cm}{-2.5cm}

\begin{center}
$\bullet\bigwedge^3\Cc^{11}$\\\vspace{20pt}
$A_{12}=\left[ \begin {array}{cccccccc} 0&1&0&0&1&1&0&0\\\noalign{\medskip}-1&0&0&1&1&1&1&1\\\noalign{\medskip}0&0&0&0&0&1&1&0\\\noalign{\medskip}0
&-1&0&0&1&1&0&1\\\noalign{\medskip}-1&-1&0&-1&0&0&1&1
\\\noalign{\medskip}-1&-1&-1&-1&0&0&0&1\\\noalign{\medskip}0&-1&-1&0&-
1&0&0&0\\\noalign{\medskip}0&-1&0&-1&-1&-1&0&0\end {array} \right] $,\hspace{20pt}
$A_{13}= \left[ \begin {array}{cccccccc} 0&1&1&0&0&1&1&0\\\noalign{\medskip}-1
&0&0&1&1&0&0&1\\\noalign{\medskip}-1&0&0&0&1&0&1&1\\\noalign{\medskip}0
&-1&0&0&0&0&0&1\\\noalign{\medskip}0&-1&-1&0&0&1&0&0
\\\noalign{\medskip}-1&0&0&0&-1&0&1&1\\\noalign{\medskip}-1&0&-1&0&0&-
1&0&0\\\noalign{\medskip}0&-1&-1&-1&0&-1&0&0\end {array} \right] $\\\vspace{10pt}
$A_{23}=\left[ \begin {array}{cccccccc} 0&1&0&0&1&1&0&0\\\noalign{\medskip}-1&0&0&0&0&1&1&1\\\noalign{\medskip}0&0&0&0&0&1&0&1\\\noalign{\medskip}0
&0&0&0&1&1&1&1\\\noalign{\medskip}-1&0&0&-1&0&0&1&0
\\\noalign{\medskip}-1&-1&-1&-1&0&0&0&1\\\noalign{\medskip}0&-1&0&-1&-
1&0&0&0\\\noalign{\medskip}0&-1&-1&-1&0&-1&0&0\end {array} \right]$
\end{center}
\end{changemargin}

\begin{center}
$\bullet\bigwedge^4\Cc^8$\\\vspace{20pt}
$\left[ \begin {array}{cccc} 0&A_{{12}}&A_{{13}}&A_{{14}}\\\noalign{\medskip}-A_{{12}}&0&A_{{23}}&A_{{24}}
\\\noalign{\medskip}-A_{{13}}&-A_{{23}}&0&A_{{34}}
\\\noalign{\medskip}-A_{{14}}&-A_{{24}}&-A_{{34}}&0\end {array}
 \right] 
$\\\vspace{20pt}
$A_{12}=\left[ \begin {array}{cccc} 0&0&1&1\\\noalign{\medskip}0&0&0&1\\\noalign{\medskip}-1&0&0&0\\\noalign{\medskip}-1&-1&0&0\end {array}
 \right]$,\hspace{10pt}
$A_{13}=\left[ \begin {array}{cccc} 0&0&0&0\\\noalign{\medskip}0&0&0&1\\\noalign{\medskip}0&0&0&1\\\noalign{\medskip}0&-1&-1&0\end {array}
 \right] $,\hspace{10pt}
$A_{14}=\left[ \begin {array}{cccc} 0&1&0&1\\\noalign{\medskip}-1&0&0&1\\\noalign{\medskip}0&0&0&0\\\noalign{\medskip}-1&-1&0&0\end {array}
 \right] $\\\vspace{20pt}
$A_{23}=\left[ \begin {array}{cccc} 0&0&1&0\\\noalign{\medskip}0&0&1&1\\\noalign{\medskip}-1&-1&0&0\\\noalign{\medskip}0&-1&0&0\end {array}
 \right] $,\hspace{10pt}
 $A_{24}=\left[ \begin {array}{cccc} 0&0&0&1\\\noalign{\medskip}0&0&0&1\\\noalign{\medskip}0&0&0&1\\\noalign{\medskip}-1&-1&-1&0\end {array}
 \right] $,\hspace{10pt}
 $A_{34}=\left[ \begin {array}{cccc} 0&0&0&0\\\noalign{\medskip}0&0&0&0\\\noalign{\medskip}0&0&0&0\\\noalign{\medskip}0&0&0&0\end {array}
 \right] $
\end{center}

\hrulefill

\begin{changemargin}{-2.5cm}{-2.5cm}
\begin{center}
$\bullet\bigwedge^4\Cc^9$\\\vspace{20pt}
$A_{12}= \left[ \begin {array}{ccccc} 0&0&1&1&0\\\noalign{\medskip}0&0&1&0&1
\\\noalign{\medskip}-1&-1&0&0&1\\\noalign{\medskip}-1&0&0&0&0
\\\noalign{\medskip}0&-1&-1&0&0\end {array} \right] 
$,\hspace{10pt}$A_{13}=\left[ \begin {array}{ccccc} 0&1&1&1&1\\\noalign{\medskip}-1&0&0&1&0\\\noalign{\medskip}-1&0&0&1&0\\\noalign{\medskip}-1&-1&-1&0&0
\\\noalign{\medskip}-1&0&0&0&0\end {array} \right] 
$,\hspace{10pt}$A_{14}=\left[ \begin {array}{ccccc} 0&0&0&0&0\\\noalign{\medskip}0&0&1&0&1\\\noalign{\medskip}0&-1&0&0&1\\\noalign{\medskip}0&0&0&0&0
\\\noalign{\medskip}0&-1&-1&0&0\end {array} \right] 
$\\\vspace{20pt}
$A_{23}=\left[ \begin {array}{ccccc} 0&0&0&1&1\\\noalign{\medskip}0&0&0&0&0\\\noalign{\medskip}0&0&0&1&1\\\noalign{\medskip}-1&0&-1&0&1
\\\noalign{\medskip}-1&0&-1&-1&0\end {array} \right] 
$,\hspace{10pt}
$A_{24}=\left[ \begin {array}{ccccc} 0&1&1&0&0\\\noalign{\medskip}-1&0&0&0&1\\\noalign{\medskip}-1&0&0&0&1\\\noalign{\medskip}0&0&0&0&0
\\\noalign{\medskip}0&-1&-1&0&0\end {array} \right] 
$,\hspace{10pt}
$A_{34}=\left[ \begin {array}{ccccc} 0&1&1&1&0\\\noalign{\medskip}-1&0&1&0&1\\\noalign{\medskip}-1&-1&0&0&1\\\noalign{\medskip}-1&0&0&0&0
\\\noalign{\medskip}0&-1&-1&0&0\end {array} \right]$
\end{center}
\end{changemargin}

\hrulefill
\begin{changemargin}{-4cm}{-4cm}
\begin{center}
$\bullet\bigwedge^5\Cc^{10}$,
$\left[ \begin {array}{ccccc} 0&A_{{12}}&A_{{13}}&A_{{14}}&A_{{15}}\\\noalign{\medskip}-A_{{12}}&0&A_{{23}}&A_{{24}}&A_{{25}}
\\\noalign{\medskip}-A_{{13}}&-A_{{23}}&0&A_{{34}}&A_{{35}}
\\\noalign{\medskip}-A_{{14}}&-A_{{24}}&-A_{{34}}&0&A_{{45}}
\\\noalign{\medskip}-A_{{15}}&-A_{{25}}&-A_{{35}}&-A_{{45}}&0
\end {array} \right] 
$\\\vspace{20pt}
$A_{12}=\left[ \begin {array}{ccccc} 0&-1&-1&0&-1\\\noalign{\medskip}1&0&0&-1&-1\\\noalign{\medskip}1&0&0&-1&-1\\\noalign{\medskip}0&1&1&0&-1
\\\noalign{\medskip}1&1&1&1&0\end {array} \right] 
$,\hspace{10pt}
$A_{13}=\left[ \begin {array}{ccccc} 0&0&-1&-1&-1\\\noalign{\medskip}0&0&-1&-1&-1\\\noalign{\medskip}1&1&0&0&-1\\\noalign{\medskip}1&1&0&0&-1
\\\noalign{\medskip}1&1&1&1&0\end {array} \right] 
$,\hspace{10pt}
$A_{14}=\left[ \begin {array}{ccccc} 0&0&0&-1&0\\\noalign{\medskip}0&0&-1&-1&-1\\\noalign{\medskip}0&1&0&0&-1\\\noalign{\medskip}1&1&0&0&0
\\\noalign{\medskip}0&1&1&0&0\end {array} \right] 
$ \\\vspace{10pt}
$A_{15}=\left[ \begin {array}{ccccc} 0&0&-1&0&0\\\noalign{\medskip}0&0&0&0&0\\\noalign{\medskip}1&0&0&-1&0\\\noalign{\medskip}0&0&1&0&-1
\\\noalign{\medskip}0&0&0&1&0\end {array} \right]$,\hspace{10pt}
 $A_{23}=\left[ \begin {array}{ccccc} 0&0&0&-1&0\\\noalign{\medskip}0&0&-1&-1&-1\\\noalign{\medskip}0&1&0&-1&0\\\noalign{\medskip}1&1&1&0&-1
\\\noalign{\medskip}0&1&0&1&0\end {array} \right] 
$,\hspace{10pt}
 $A_{24}=\left[ \begin {array}{ccccc} 0&0&-1&0&-1\\\noalign{\medskip}0&0&-1&0&-1\\\noalign{\medskip}1&1&0&0&0\\\noalign{\medskip}0&0&0&0&-1
\\\noalign{\medskip}1&1&0&1&0\end {array} \right] 
$\\\vspace{10pt}
 $A_{25}\!=\!\! \left[ \begin {array}{ccccc} 0&0&-1&0&0\\\noalign{\medskip}0&0&0&-1&0
\\\noalign{\medskip}1&0&0&-1&0\\\noalign{\medskip}0&1&1&0&-1
\\\noalign{\medskip}0&0&0&1&0\end {array} \right] 
$,
 $A_{34}\!=\!\!  \left[ \begin {array}{ccccc} 0&0&-1&0&-1\\\noalign{\medskip}0&0&0&0&0
\\\noalign{\medskip}1&0&0&-1&0\\\noalign{\medskip}0&0&1&0&-1
\\\noalign{\medskip}1&0&0&1&0\end {array} \right] 
$,
 $A_{35}\!=\!\! \left[ \begin {array}{ccccc} 0&0&0&0&-1\\\noalign{\medskip}0&0&0&0&0
\\\noalign{\medskip}0&0&0&0&0\\\noalign{\medskip}0&0&0&0&-1
\\\noalign{\medskip}1&0&0&1&0\end {array} \right] 
$ 
$A_{45}\!=\!\! \left[ \begin {array}{ccccc} 0&0&0&-1&0\\\noalign{\medskip}0&0&0&0&0\\\noalign{\medskip}0&0&0&0&0\\\noalign{\medskip}1&0&0&0&-1
\\\noalign{\medskip}0&0&0&1&0\end {array} \right] 
$
\end{center}
\end{changemargin}

\subsection{List of Matrices 2}
For $k=4$, $N=8$ the matrix below is invertible:
\begin{align}\label{48invertible}
  \begin{bmatrix}
    \begin{array}{cccc|cccc|cccc|cccc} 0&0&0&0&0&1&1&1&0&-1&0&0&0&0&1&0\\0&0&0&0&-1&0&1&0&1&0&-1&0&0&0&1&0
\\0&0&0&0&-1&-1&0&0&0&1&0&-1&-1&-1&0&0
\\0&0&0&0&-1&0&0&0&0&0&1&0&0&0&0&0
\\\hline0&-1&-1&-1&0&0&0&0&0&0&0&0&0&0&-1&0
\\1&0&-1&0&0&0&0&0&0&0&0&1&0&0&-1&0
\\1&1&0&0&0&0&0&0&0&0&0&0&1&1&0&0
\\1&0&0&0&0&0&0&0&0&-1&0&0&0&0&0&0
\\\hline0&1&0&0&0&0&0&0&0&0&0&0&0&0&0&0
\\-1&0&1&0&0&0&0&-1&0&0&0&0&0&0&0&1
\\0&-1&0&1&0&0&0&0&0&0&0&0&0&0&0&1
\\0&0&-1&0&0&1&0&0&0&0&0&0&0&-1&-1&0
\\\hline0&0&-1&0&0&0&1&0&0&0&0&0&0&0&0&0
\\0&0&-1&0&0&0&1&0&0&0&0&-1&0&0&0&0
\\1&1&0&0&-1&-1&0&0&0&0&0&-1&0&0&0&0
\\0&0&0&0&0&0&0&0&0&1&1&0&0&0&0&0\end{array}
  \end{bmatrix}
\end{align}\\

Here we list the matrices satisfying \ref{c1},\ref{c2}, \ref{c3} and \ref{c4} for $k=3$, $N=9,10,11$ which was used in the proof of theorem \ref{thmnode}. A similar list also appears in \cite{hol}. 
\newpage
\begin{itemize}
\item $N=9$\vspace{10pt}

\begin{changemargin}{-4cm}{-4cm}
\begin{center}
$A_{12}=\left[ \begin {array}{cccccc} 0&2&3&2&1&2\\\noalign{\medskip}-2&0&1&0&3&2\\\noalign{\medskip}-3&-1&0&1&3&2\\\noalign{\medskip}-2&0&-1&0&0&0
\\\noalign{\medskip}-1&-3&-3&0&0&0\\\noalign{\medskip}-2&-2&-2&0&0&0
\end {array} \right] 
$,
$A_{13}=\left[ \begin {array}{cccccc} 0&-3&-2&-2&-3&-1\\\noalign{\medskip}3&0&0&-2&0&-1\\\noalign{\medskip}2&0&0&-1&-3&-3\\\noalign{\medskip}2&2&1&0
&0&0\\\noalign{\medskip}3&0&3&0&0&0\\\noalign{\medskip}1&1&3&0&0&0
\end {array} \right] 
$, $A_{23}=\left[ \begin {array}{cccccc} 0&2&0&0&3&3\\\noalign{\medskip}-2&0&1&1&0&2\\\noalign{\medskip}0&-1&0&2&1&0\\\noalign{\medskip}0&-1&-2&0&0&0
\\\noalign{\medskip}-3&0&-1&0&0&0\\\noalign{\medskip}-3&-2&0&0&0&0
\end {array} \right] 
$
\end{center}
\end{changemargin}
\vspace{5pt}
\hrulefill
\vspace{10pt}
\item $N=10$
\vspace{10pt}
\begin{changemargin}{-4.4cm}{-4.4cm}
\begin{center}
$\!\!\!\!\!\!\!\!A_{12}\!=\!\! \left[ \begin {array}{ccccccc} 0&1&0&0&0&2&0\\\noalign{\medskip}-1&0&
2&1&0&2&2\\\noalign{\medskip}0&-2&0&2&0&2&2\\\noalign{\medskip}0&-1&-2
&0&2&0&0\\\noalign{\medskip}0&0&0&-2&0&0&0\\\noalign{\medskip}-2&-2&-2
&0&0&0&0\\\noalign{\medskip}0&-2&-2&0&0&0&0\end {array} \right] 
$\!\!
$A_{13}\!=\!\!\left[ \begin {array}{ccccccc} 0&-2&-1&-2&-1&-1&-1\\\noalign{\medskip}2&0&-1&-2&-1&0&-1\\\noalign{\medskip}1&1&0&-1&-2&-
2&-1\\\noalign{\medskip}2&2&1&0&-1&0&-1\\\noalign{\medskip}1&1&2&1&0&0
&0\\\noalign{\medskip}1&0&2&0&0&0&0\\\noalign{\medskip}1&1&1&1&0&0&0
\end {array} \right] 
$\!\! $A_{23}\!=\!\!\left[ \begin {array}{ccccccc} 0&1&2&2&1&1&2\\\noalign{\medskip}-1&0&1&2&1&2&1\\\noalign{\medskip}-2&-1&0&2&0&1&0\\\noalign{\medskip}-2&-2&
-2&0&0&2&1\\\noalign{\medskip}-1&-1&0&0&0&0&0\\\noalign{\medskip}-1&-2
&-1&-2&0&0&0\\\noalign{\medskip}-2&-1&0&-1&0&0&0\end {array} \right] 
$
\end{center}
\end{changemargin}
\vspace{5pt}
\hrulefill
\vspace{10pt}
\item $N=11$ 
\vspace{10pt}
\begin{changemargin}{-2.5cm}{-2.5cm}
\begin{center}
$A_{12}=\left[ \begin {array}{cccccccc} 0&1&0&2&2&2&0&1\\\noalign{\medskip}-1&0&0&0&0&0&1&2\\\noalign{\medskip}0&0&0&1&2&1&1&0\\\noalign{\medskip}-
2&0&-1&0&1&0&2&0\\\noalign{\medskip}-2&0&-2&-1&0&0&2&2
\\\noalign{\medskip}-2&0&-1&0&0&0&0&0\\\noalign{\medskip}0&-1&-1&-2&-2
&0&0&0\\\noalign{\medskip}-1&-2&0&0&-2&0&0&0\end {array} \right] 
$\hspace{20pt}
$A_{13}=\left[ \begin {array}{cccccccc} 0&-1&-1&-1&-1&-2&-1&-2\\\noalign{\medskip}1&0&-1&0&-2&-2&0&0\\\noalign{\medskip}1&1&0&-1&0&0
&0&-1\\\noalign{\medskip}1&0&1&0&0&-1&-2&-2\\\noalign{\medskip}1&2&0&0
&0&0&-1&0\\\noalign{\medskip}2&2&0&1&0&0&0&0\\\noalign{\medskip}1&0&0&
2&1&0&0&0\\\noalign{\medskip}2&0&1&2&0&0&0&0\end {array} \right] 
$

$A_{23}=\left[ \begin {array}{cccccccc} 0&1&0&2&1&2&2&2\\\noalign{\medskip}-1&0&0&1&1&0&0&2\\\noalign{\medskip}0&0&0&0&1&2&2&0\\\noalign{\medskip}-
2&-1&0&0&2&0&0&2\\\noalign{\medskip}-1&-1&-1&-2&0&1&0&0
\\\noalign{\medskip}-2&0&-2&0&-1&0&0&0\\\noalign{\medskip}-2&0&-2&0&0&0
&0&0\\\noalign{\medskip}-2&-2&0&-2&0&0&0&0\end {array} \right] 
$
\end{center}
\end{changemargin}

\end{itemize}

\end{document}